\newtheorem{theorem}{Theorem}
\newtheorem{corollary}[theorem]{Corollary}
\newtheorem{proposition}[theorem]{Proposition}
\newtheorem{observation}{Observation}
\newtheorem{conjecture}{Conjecture}
\newtheorem{question}{Question}
\newcommand{\set}[1]{\ensuremath{\left\{#1 \right\}}}
\newcommand{\cartp}{\mathbin{\square}}
\g@addto@macro{\UrlBreaks}{\UrlOrds}
\begin{document}

\title{On incidence coloring conjecture in \\Cartesian products of graphs}

\author
{
	Petr Gregor\thanks{Department of Theoretical Computer Science and Mathematical Logic, Charles University, Prague, Czech Republic. 
		E-Mail: \texttt{gregor@ktiml.mff.cuni.cz}}, \
	Borut Lu\v{z}ar\thanks{Faculty of Information Studies, Novo mesto, Slovenia. E-Mail: \texttt{borut.luzar@gmail.com}}, \
	Roman Sot\'{a}k\thanks{Faculty of Science, Pavol Jozef \v Saf\'arik University, Ko\v sice, Slovakia.
		E-Mail: \texttt{roman.sotak@upjs.sk}}
}

\maketitle

{
\begin{abstract}
	An \textit{incidence} in a graph $G$ is a pair $(v,e)$ where $v$ is a vertex of $G$ 
	and $e$ is an edge of $G$ incident to $v$. 
	Two incidences $(v,e)$ and $(u,f)$ are \textit{adjacent} if at least one of the following holds: 
	$(a)$ $v = u$, $(b)$ $e = f$, or $(c)$ $vu \in \{e,f\}$. An \textit{incidence coloring}
	of $G$ is a coloring of its incidences assigning distinct colors to adjacent incidences. 
	It was conjectured that at most $\Delta(G) + 2$ colors are needed for an incidence coloring
	of any graph $G$. The conjecture is false in general, but the bound holds for many classes 
	of graphs. We introduce some sufficient properties of the two factor graphs of a Cartesian
	product graph $G$ for which $G$ admits an incidence coloring with at most $\Delta(G) + 2$ colors.	
\end{abstract}
}

\bigskip
{\noindent\small \textbf{Keywords:} incidence coloring, Cartesian product, locally injective homomorphism.}

\section{Introduction}

An \textit{incidence} in a graph $G$ is a pair $(v,e)$ where $v$ is a vertex of $G$ 
and $e$ is an edge of $G$ incident to $v$. The set of all incidences of $G$ is denoted $I(G)$.
Two incidences $(v,e)$ and $(u,f)$ are \textit{adjacent} if 
at least one of the following holds: $(a)$ $v = u$, $(b)$ $e = f$, or $(c)$ $vu \in \{e,f\}$ (see Fig.~\ref{fig:def}). 
An \textit{incidence coloring} of $G$ is a coloring of its incidences such that adjacent incidences are assigned 
distinct colors. The least $k$ such that $G$ admits an incidence coloring with $k$ colors is called the 
\textit{incidence chromatic number of $G$}, denoted by $\chi_i(G)$. An incidence coloring of $G$ is called \textit{optimal} 
if it uses precisely $\chi_i(G)$ colors.
\begin{figure}[ht]
	$$
		\includegraphics[scale=1.0]{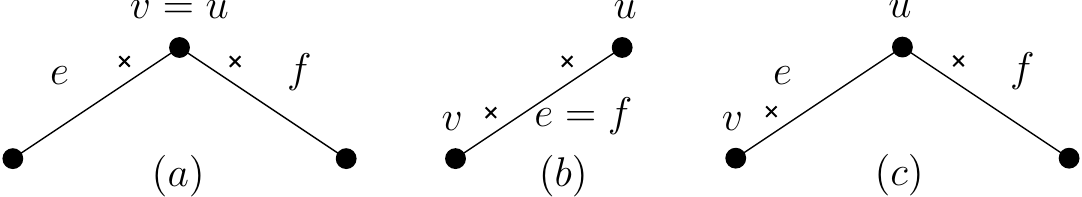}
	$$
	\caption{Three types of adjacent incidences.}
	\label{fig:def}
\end{figure}

The incidence coloring of graphs was defined in 1993 by Brualdi and Massey~\cite{BruMas93} and attracted considerable attention as 
it is related to several other types of colorings. As already observed by the originators, it is directly connected to \textit{strong edge-coloring}, 
i.e. a proper edge-coloring such that the edges at distance at most two receive distinct colors. Indeed, consider a bipartite graph $H$ with the vertex set 
$V(H)= V(G)\cup E(G)$ and two vertices $v\in V(G)$ and $e \in E(G)$ adjacent in $H$ if and only if $v$ is incident to $e$ in $G$; that is, $H$ is the 
graph $G$ with every edge subdivided. Then, a strong edge-coloring of $H$ corresponds to an incidence coloring of $G$.
This in turn means that the incidence chromatic number of a graph $G$ is equal to the strong chromatic index of $H$.

A graph $G$ is called a \textit{($\Delta+k$)-graph} if it admits an incidence coloring with $\Delta(G)+k$ colors for some positive integer $k$.
A complete characterization of $(\Delta+1)$-graphs is still an open problem.
While it is a trivial observation that complete graphs and trees are such, it is harder to determine additional classes.
This problem has already been addressed in several papers; it was shown that
Halin graphs with maximum degree at least $5$~\cite{WanChePan02}, outerplanar graphs with maximum degree at least $7$~\cite{ShiSun08}, 
planar graphs with girth at least $14$~\cite{BonLevPin11}, and square, honeycomb and hexagonal meshes~\cite{HuaWanChu04} are $(\Delta+1)$-graphs. 
In fact, every $n$-regular graph with a partition into $n+1$ (perfect) dominating sets is such, as observed by Sun~\cite{Sun12}.
\begin{theorem}[Sun, 2012]\label{thm:partition}
	If $G$ is an $n$-regular graph, then $\chi_i(G) = n + 1$ if and only if $V(G)$
    is a disjoint union of $n+1$ dominating sets.
\end{theorem}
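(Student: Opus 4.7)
The starting observation is that in any $n$-regular graph $G$, the $n$ incidences at a vertex $v$ must all receive distinct colors by adjacency type~(a); so, if $\chi_i(G)=n+1$, each vertex $v$ has a unique missing color $m(v)\in\{1,\dots,n+1\}$. The crucial rigidity is that for every edge $uv$ the incidence $(u,uv)$ is then forced to receive color $m(v)$: it is adjacent to $(v,uv)$ by~(b) and to each $(v,vw)$ with $w\neq u$ by~(c), so it is forbidden every color appearing at $v$, leaving only $m(v)$.

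For the forward direction, reading off the $n$ incidences at an arbitrary vertex $u$ shows that the colors $m(v_1),\dots,m(v_n)$ of $(u,uv_1),\dots,(u,uv_n)$ are pairwise distinct (by~(a)) and all differ from $m(u)$, hence they exhaust $\{1,\dots,n+1\}\setminus\{m(u)\}$. Consequently, for every color $\alpha\neq m(u)$ the vertex $u$ has a neighbor in $D_\alpha:=m^{-1}(\alpha)$, so each $D_\alpha$ is a dominating set; since $m$ is a well-defined map on $V(G)$, the sets $D_1,\dots,D_{n+1}$ form the required partition.

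For the reverse direction, starting from a partition $V(G)=D_1\sqcup\cdots\sqcup D_{n+1}$ into dominating sets, I set $m(v):=\alpha$ for $v\in D_\alpha$ and define $c(u,uv):=m(v)$. A preliminary counting step is to show that each $D_\alpha$ is in fact independent: any $u\in D_\alpha$ needs at least one neighbor in each of the $n$ classes $D_\beta$ with $\beta\neq\alpha$, but has only $n$ neighbors, so each such $D_\beta$ contains exactly one neighbor of $u$ while $D_\alpha$ contains none. With this ``perfect-partition'' property, adjacency~(a) translates into the $m$-values on the neighborhood of $u$ being distinct, (b) into $m(u)\neq m(v)$ for $uv\in E(G)$, and (c) into two distinct neighbors of a common vertex lying in different classes; all three are then immediate.

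The whole argument hinges on the identification ``color of $(u,uv)$ equals the missing color at $v$'', forced by the tightness of the color budget. This is the one real idea; both directions then reduce to counting. A minor but easy-to-overlook technicality, rather than a genuine obstacle, is the automatic independence of the partition classes in the reverse direction---anticipated by the parenthetical ``(perfect)'' in the remark preceding the theorem---without which condition~(a) of the constructed coloring would not hold.
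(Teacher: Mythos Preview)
Your argument is correct. The key identification $c(u,uv)=m(v)$ is forced exactly as you say, and from it both directions follow by the pigeonhole counts you give; the independence of each $D_\alpha$ in the reverse direction is indeed automatic and is exactly what makes condition~(a) hold.

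However, there is nothing to compare against: the paper does not prove this statement. Theorem~\ref{thm:partition} is quoted from Sun~\cite{Sun12} and stated without proof; it is used only as an ingredient (in Corollary~\ref{cor:hyper}) and as motivation for Observation~\ref{obs:reg}. Your proof is the standard one for this result, and in fact Observation~\ref{obs:reg} in the paper is precisely the ``crucial rigidity'' step you isolate in your first paragraph.
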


\begin{observation}
	\label{obs:reg} 
  In any optimal incidence coloring $c$ of a regular ($\Delta+1$)-graph $G$, 
  for every vertex $v$ there is a color $c_v$ such that for every edge $uv$,  it holds $c(u,uv)=c_v$. 
\end{observation}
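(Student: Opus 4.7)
The plan is a direct counting/pigeonhole argument exploiting the three adjacency types at a fixed vertex.

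First I would fix a vertex $v$ and look at the $\Delta$ incidences $(v,e_1), \dots, (v,e_\Delta)$, where $e_1,\dots,e_\Delta$ are the edges at $v$. By adjacency type $(a)$ these incidences are pairwise adjacent, hence they must receive $\Delta$ pairwise distinct colors under $c$. Since $c$ uses only $\Delta+1$ colors in total, there is exactly one color, which I would name $c_v$, that does not appear among the incidences at $v$.

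Next, for an arbitrary edge $uv$ incident to $v$, I would consider the ``incoming'' incidence $(u,uv)$ and compare it to each incidence $(v,e_i)$. Adjacency type $(c)$ applies, because the edge $uv$ is one of the two edges of the pair (it is the edge of the first incidence). So $(u,uv)$ is adjacent to every one of the $\Delta$ incidences at $v$, and hence $c(u,uv)$ must avoid all $\Delta$ colors used at $v$. The only remaining color in the palette is $c_v$, so $c(u,uv)=c_v$, as required.

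The statement never really meets an obstacle; the only point worth writing carefully is the invocation of type $(c)$ to show that $(u,uv)$ is forced to lie outside the color set at $v$. Regularity is used solely to guarantee that $v$ has $\Delta$ incidences (so exactly one ``missing'' color appears), and the $(\Delta+1)$-hypothesis is what makes this missing color unique and independent of the choice of neighbor $u$.
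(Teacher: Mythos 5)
Your proof is correct and is essentially the argument the paper intends: the observation is stated without proof because it follows directly from the spectrum facts in the preliminaries ($|S_c^0(v)|=\Delta$, $S_c^0(v)\cap S_c^1(v)=\emptyset$, and only $\Delta+1$ colors available force $|S_c^1(v)|=1$), which is exactly your pigeonhole argument spelled out from the adjacency definitions. No issues.
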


Similarly intriguing as the lower bound is the upper bound. Brualdi and Massey~\cite{BruMas93} proved that 
$\chi_i(G) \le 2\Delta(G)$ for every graph $G$. Aside to that, they proposed the following.
\begin{conjecture}[Brualdi and Massey, 1993]
	\label{conj:main}
    For every graph $G$, $$\chi_i(G) \le \Delta(G) + 2\,.$$
\end{conjecture}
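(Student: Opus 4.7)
My plan is to attempt the bound by reducing to the regular case and then pursuing a local-density argument. First, I would embed an arbitrary graph $G$ with maximum degree $\Delta$ as an induced subgraph of a $\Delta$-regular supergraph $G'$: take two disjoint copies of $G$ and iteratively add matchings between the vertices of deficient degree until the graph becomes $\Delta$-regular (this is a standard construction). Since any incidence coloring of $G'$ restricts to one of $G$ using the same palette, it suffices to prove $\chi_i(G) \le \Delta+2$ when $G$ is $\Delta$-regular, where Theorem~\ref{thm:partition} and Observation~\ref{obs:reg} offer useful structural guidance: one would try to select, for every vertex $v$, a single "inward" color $c_v$ (the color assigned to $(u, uv)$ for every $u \in N(v)$), and then extend to the $\Delta$ "outward" incidences at $v$ greedily from the remaining $\Delta+1$ colors.

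Next, I would phrase the problem as proper coloring of the incidence graph $\mathcal{I}(G)$, whose vertices are the incidences of $G$ and whose edges join adjacent incidences. A direct count shows that $\mathcal{I}(G)$ is $(3\Delta-2)$-regular when $G$ is $\Delta$-regular, so the naive greedy bound is only $\chi_i(G) \le 3\Delta-1$. To push this down to $\Delta+2$, I would attempt a Lovász Local Lemma argument: assign each incidence a uniformly random color from a palette of size $\Delta+2$, let the bad events be monochromatic adjacent pairs, and try to verify the LLL criterion using the local sparsity of $\mathcal{I}(G)$. Alternatively, an entropy-compression/Moser--Tardos style analysis could be attempted to sharpen the dependency count.

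The main obstacle, I expect, is fundamental rather than technical: the palette $\Delta+2$ is extremely tight, while LLL-type calculations yield at best a bound of the form $\Delta + O(\log \Delta)$. In particular, dense pseudo-random graphs (Paley graphs and related algebraic constructions) force many incidences at pairs of nearby vertices to compete for very few colors, and I would expect any proof along these lines to break down precisely on such instances. This matches the paper's own warning in the abstract that the conjecture is false in general: the abundance of short "conflict paths" in strongly regular-type constructions shows that no greedy, probabilistic, or partition-based strategy can succeed for arbitrary $G$, and one must restrict to structured classes of graphs (as the sequel does, by considering Cartesian products with appropriate factor hypotheses).
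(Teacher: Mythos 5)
There is no proof of this statement in the paper to compare against: the statement is Conjecture~\ref{conj:main}, and the paper explicitly records that it is \emph{false} in general --- Guiduli observed, via the connection to directed star arboricity, that Paley graphs are counterexamples. Your proposal is therefore correctly inconclusive, and your closing paragraph reaches the right verdict for essentially the right reason: dense pseudo-random graphs defeat any argument of the kind you sketch, and the best general upper bound known (Guiduli's $\Delta + 20\log\Delta + 84$) is exactly of the $\Delta + O(\log\Delta)$ shape you predict the LLL would give. The paper's actual contribution is to verify the bound only for restricted classes (Cartesian products with suitable factor hypotheses), which is the retreat you anticipate.

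Two concrete remarks on the intermediate steps, in case you pursue the bound for special classes. First, the regularization step is sound (an incidence coloring of a $\Delta$-regular supergraph $G'$ with $\Delta(G')=\Delta(G)$ restricts to one of $G$), but the strategy you then propose --- a single ``inward'' color $c_v$ at every vertex --- is precisely a $(\Delta+2,1)$-incidence coloring, which by the paper's equation~\eqref{eq:square} is equivalent to a proper coloring of $G^2$ with $\Delta+2$ colors. Since $\chi(G^2)$ can be as large as $\Delta^2+1$ (and is already $2n$ for $K_{n,n}$, as the paper notes), this strategy fails even on many graphs for which the conjecture \emph{does} hold; the slack between $\chi_i$ and $\chi_{i,1}$ is essential. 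Second, your degree count for the incidence conflict graph ($3\Delta-2$ in the regular case) is correct, which is exactly why no purely local argument can close the gap from $3\Delta-1$ down to $\Delta+2$: the palette is smaller than the conflict degree by a factor of about $3$, so one needs global structure, not local sparsity.
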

Conjecture~\ref{conj:main} has been disproved by Guiduli~\cite{Gui97} who observed that incidence coloring is a special case of directed star arboricity, 
introduced by Algor and Alon~\cite{AlgAlo89}. Based on this observation, it was clear that Paley graphs are counterexamples to the conjecture. 
Nevertheless, Guiduli~\cite{Gui97} decreased the upper bound for simple graphs.
\begin{theorem}[Guiduli, 1997]
	For every simple graph $G$, 
    $$
    	\chi_i(G) \le \Delta(G) + 20 \log \Delta(G) + 84\,.
    $$
\end{theorem}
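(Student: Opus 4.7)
The plan is to reformulate incidence coloring as a directed star arboricity problem and then apply the probabilistic method, following the approach of Algor and Alon mentioned in the excerpt. Let $\vec{G}$ denote the symmetric digraph obtained from $G$ by replacing each edge $uv$ by the two arcs $(u,v)$ and $(v,u)$. A \emph{galaxy} in a digraph is a spanning subgraph whose weakly connected components are stars with all arcs directed from the centre to the leaves. The \emph{directed star arboricity} $\mathrm{dst}(D)$ is the minimum number of galaxies needed to partition the arcs of $D$. First I would verify the identification $\chi_i(G) = \mathrm{dst}(\vec G)$: an incidence $(v,vu)$ corresponds naturally to the arc $(v,u)$, and one checks that two incidences are adjacent exactly when the corresponding arcs either share their tail, or the head of one equals the tail of the other; hence a colouring of arcs in which each colour class is a galaxy is precisely an incidence colouring. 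Note that $\vec G$ has both maximum in-degree and maximum out-degree equal to $\Delta:=\Delta(G)$.

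The main step is to prove that for every digraph $D$ with maximum in-degree and out-degree at most $\Delta$ one has $\mathrm{dst}(D)\le \Delta+20\log\Delta+84$. I would do this in two probabilistic stages. In the first stage, pick a large integer $k\approx\Delta/\log\Delta$ and colour each arc of $D$ uniformly and independently with one of $k$ colours. For each colour class, let $D_i$ be the subdigraph it induces; by Chernoff bounds, every vertex has in-degree and out-degree in $D_i$ bounded by $t:=\Delta/k+c\sqrt{(\Delta/k)\log\Delta}=O(\log\Delta)$ with high probability, with the bad events being ``some vertex has too large degree in some $D_i$''. Because each bad event depends on at most $O(\Delta)$ others and occurs with probability at most $\Delta^{-C}$ for an arbitrarily large $C$, the Lovász Local Lemma yields a partition of $\vec G$ into $k$ digraphs each of maximum in-degree and out-degree at most $t$.

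In the second stage I would decompose each $D_i$ into galaxies. For a digraph of maximum in- and out-degree $t$, one can first properly colour its arcs (viewed as an undirected multigraph) with roughly $t+O(\sqrt{t\log t})$ colours, or more directly show by another Local Lemma application that its arcs partition into $t+O(\log t)$ galaxies: randomly assign each arc with tail $v$ one of $t+s$ labels, and forbid (as bad events) the presence of two arcs at the same head sharing a label, which happens because inside a colour class each vertex is the head of at most one arc. With $s=O(\log t)$ the Local Lemma again succeeds. Summing, we cover $D$ with $k(t+O(\log t))=\Delta+O(\log\Delta)$ galaxies, and tracking the constants from the two Chernoff/Local Lemma applications gives the stated $20\log\Delta+84$.

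The main obstacle is neither of the two probabilistic arguments individually, but the bookkeeping: the in-stage-one fluctuation $c\sqrt{(\Delta/k)\log\Delta}$, the overshoot $O(\log t)$ in stage two, and the additive constants from Local Lemma slack must all be optimised simultaneously so that their product totals at most $20\log\Delta+84$. One has to choose $k$ (and the Chernoff and Local Lemma deviation parameters) carefully and handle small $\Delta$ separately via the trivial bound $\chi_i(G)\le 2\Delta$ from Brualdi and Massey to absorb the additive $84$.
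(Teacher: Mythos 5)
The paper does not prove this statement at all; it is quoted from Guiduli's 1997 paper with a citation, so there is no internal proof to compare yours against. Your high-level route --- identifying $\chi_i(G)$ with the directed star arboricity of the symmetric digraph and then applying the probabilistic method --- is indeed the route the literature takes (it is exactly the Guiduli/Algor--Alon connection the paper alludes to), and your adjacency characterization of incidences as arcs is correct. Two local slips, though: with your identification $(v,vu)\mapsto(v,u)$ the colour classes come out as \emph{in}-directed stars (all arcs pointing to the centre), not the out-directed galaxies you define; and in stage two your bad events only forbid two arcs at the same head sharing a label, so they enforce only one of the two galaxy constraints --- you never exclude a vertex being simultaneously a head and a tail within one class, so the classes you produce need not be galaxies.

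The genuine gap, however, is quantitative, and it defeats the stated bound rather than merely the constants. In stage one you split the arcs into $k\approx\Delta/\log\Delta$ random classes, each of maximum degree $t=\Delta/k+c\sqrt{(\Delta/k)\log\Delta}$, and then claim a total of $k\bigl(t+O(\log t)\bigr)=\Delta+O(\log\Delta)$ galaxies. But $kt=\Delta+c\sqrt{k\Delta\log\Delta}$, which with your choice of $k$ is $(1+c)\Delta$ --- a multiplicative overhead --- and no choice of $k\ge 2$ makes $\sqrt{k\Delta\log\Delta}$ logarithmic in $\Delta$: the per-part Chernoff fluctuation always gets multiplied back by the number of parts. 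So a one-shot equitable split can never yield $\Delta+O(\log\Delta)$, and the ``summing'' step is a miscalculation, not a bookkeeping chore. Repairing this requires a structurally different decomposition in which the deviations telescope (which is what Guiduli's actual argument supplies, and what produces the explicit $20\log\Delta+84$); your closing paragraph, which frames the remaining work as optimising constants and absorbing small $\Delta$ via $\chi_i(G)\le 2\Delta(G)$, therefore understates what is missing.
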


Although Conjecture~\ref{conj:main} has been disproved in general, it has been confirmed for many graph classes, 
e.g. cubic graphs~\cite{May05}, partial $2$-trees (and thus also outerplanar graphs)~\cite{DolSopZhu04}, and powers of cycles 
(with a finite number of exceptions)~\cite{NakNak12}, to list just a few.
We refer an interested reader to~\cite{Sop15} for a thorough online survey on incidence coloring results.

Recently, Pai et al.~\cite{PaiChaYanWu14} considered incidence coloring of hypercubes. Recall that the $n$-dimensional 
hypercube $Q_n$ for an integer $n\ge 1$ is the graph with the vertex set $V(Q_n)=\{0,1\}^n$ and edges between vertices that differ in exactly one coordinate. They proved
that the conjecture holds for hypercubes of special dimensions in the following form.
\begin{theorem}[Pai et al., 2014]
	For every integers $p,q\ge 1$,
    \begin{itemize}
    	 \item[$(i)$] $\chi_i(Q_n) = n + 1$, if $n = 2^p - 1$;
         \item[$(ii)$] $\chi_i(Q_n) = n + 2$, 
         	if $n = 2^p - 2$ and $p\ge 2$, or $n = 2^p + 2^q - 1$, or $n = 2^p + 2^q - 3$ and $p,q \ge 2$.
    \end{itemize}
\end{theorem}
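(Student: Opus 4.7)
The plan is to combine Theorem~\ref{thm:partition} (Sun's characterization) with classical Hamming-code partitions of the hypercube and with Cartesian-product constructions.

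For part~(i), since $Q_n$ is $n$-regular one has $\chi_i(Q_n)\ge n+1$, and by Theorem~\ref{thm:partition} it suffices to partition $V(Q_n)$ into $n+1=2^p$ dominating sets. Take the binary Hamming code $\mathcal H$ of length $2^p-1$: this is a perfect $1$-error-correcting code, so every vertex of $Q_n$ is either a codeword or at distance exactly $1$ from a unique codeword. Consequently $\mathcal H$ is a perfect dominating set, and the same holds for each of its $2^p$ cosets, which together partition $V(Q_n)$. Theorem~\ref{thm:partition} then yields $\chi_i(Q_{2^p-1})=2^p$.

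For the lower bounds in part~(ii) I use the converse direction of Theorem~\ref{thm:partition}. If $\chi_i(Q_n)=n+1$, then $V(Q_n)$ decomposes into $n+1$ dominating sets; a standard double count in an $n$-regular graph forces each such set to be a \emph{perfect} dominating set of size $2^n/(n+1)$, so $(n+1)\mid 2^n$, i.e.\ $n+1$ must be a power of $2$. A direct check shows that for each of $n=2^p-2$, $n=2^p+2^q-1$ (in case~(ii) necessarily $p\ne q$, since $p=q$ falls under (i)), and $n=2^p+2^q-3$ with $p,q\ge 2$, the integer $n+1$ has an odd factor strictly larger than $1$ and is therefore not a power of $2$. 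Hence $\chi_i(Q_n)\ge n+2$ in each case.

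For the upper bounds in part~(ii) I construct explicit colorings via the Cartesian decomposition $Q_{a+b}=Q_a\cartp Q_b$. The easiest case is $n=2^p-2$: fixing the last coordinate identifies $Q_n$ with an induced subgraph of $Q_{2^p-1}$, so the restriction of the optimal $2^p$-coloring from part~(i) is an incidence coloring of $Q_n$ using $2^p=n+2$ colors. For the remaining two cases, decompose $Q_n=Q_{2^p-1}\cartp Q_{n-(2^p-1)}$ (respectively $Q_n=Q_{2^p-2}\cartp Q_{2^q-1}$), so that at least one factor admits an optimal $(\Delta+1)$-coloring by part~(i) while the other admits a $(\Delta+2)$-coloring obtained recursively from an earlier case. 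A naive concatenation of palettes uses one color too many, so the task reduces to a careful palette merge.

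The crux of the argument, and the main obstacle, is this final merging step. Observation~\ref{obs:reg} tells us that in the optimal coloring of $Q_{2^p-1}$ every vertex $v$ has a single incoming color $c_v$, whose color classes are precisely the $2^p$ Hamming cosets; the idea is to orient the second factor's coloring so that at each vertex $v$ the color $c_v$ does not appear on any incidence coming from the second factor, which allows $c_v$ to be reused across the two Cartesian directions and trims the palette back to $n+2$. Exhibiting such a compatible coloring of the second factor — effectively, showing that the Hamming-coset labeling is flexible enough to be matched with an inductively built coloring of a smaller hypercube — is where the specific coding-theoretic structure supplied by part~(i) is essential.
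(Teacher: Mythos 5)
Your part~(i) is correct and is exactly the argument the paper uses (Hamming code cosets as $n+1$ perfect dominating sets, then Theorem~\ref{thm:partition}). Your lower bounds in part~(ii) via the divisibility obstruction $(n+1)\mid 2^n$ are also sound, including the observation that $p\neq q$ must be assumed in the case $n=2^p+2^q-1$. The upper bound for $n=2^p-2$ by restricting the optimal coloring of $Q_{2^p-1}$ to an induced copy of $Q_{2^p-2}$ is a clean, complete argument (and slightly more elementary than what the paper does for that case).

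However, for the remaining two cases of part~(ii) you have a genuine gap, and you say so yourself: the ``careful palette merge'' is precisely the content you would need to supply, and your sketch of it does not work as stated. Your plan is to give the second factor an inductively built $(\Delta+2)$-coloring and then make it ``compatible'' with the Hamming-coset coloring of the first factor; but nothing in your setup guarantees that an arbitrary $(\Delta+2)$-coloring of $Q_{n-(2^p-1)}$ can be arranged so that the single incoming color $c_v$ of each vertex $v$ of the first factor is avoided on all second-factor incidences at $v$ --- that is a nontrivial compatibility condition, and you do not establish it. The missing idea (which is how the paper handles all non-Hamming dimensions at once, in Theorem~\ref{thm:product} and Corollary~\ref{cor:hyper}) is to \emph{not} color the second factor $H=Q_k$ with a coloring of its own, but instead to restrict to it the optimal $(\Delta+1)$-coloring of a \emph{larger regular} $(\Delta+1)$-graph $H'=Q_{2^m-1}$ containing it. Since that optimal coloring gives every vertex of $H'$ a full spectrum and, by Observation~\ref{obs:reg}, the restricted spectrum at each vertex $v$ of $H$ has size only $d_H(v)+1$, every vertex of $H$ is left with at least $\Delta(H')-\Delta(H)$ \emph{missing} colors; one then offsets the two palettes so that they overlap in exactly $\Delta(H')-\Delta(H)$ colors and, vertex by vertex, injects the overlapping colors of the first factor's palette into the missing colors of the second. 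The condition $\Delta(G)+1\ge\Delta(H')-\Delta(H)$ (which holds automatically here, as $2^m\ge 2^m-1-k$) is what makes the injection possible. Without this device --- or some concrete substitute for your ``orientation'' step --- your proof of the upper bounds for $n=2^p+2^q-1$ and $n=2^p+2^q-3$ is not complete.
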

They also obtained some additional upper bounds for the dimensions of other forms and proposed the following conjecture. 
\begin{conjecture}[Pai et al., 2014]
	\label{conj:pai}
	For every $n\ge 1$ such that $n \neq 2^p - 1$ for every integer $p \ge 1$, 
    $$
    	\chi_i(Q_n) = n + 2\,.
    $$
\end{conjecture}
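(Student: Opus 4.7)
The plan is to handle the lower and upper bounds separately.

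For the lower bound I would apply Theorem~\ref{thm:partition}: since $Q_n$ is $n$-regular, $\chi_i(Q_n) = n+1$ if and only if $V(Q_n)$ can be partitioned into $n+1$ dominating sets. A short counting argument shows that in an $n$-regular graph any such partition forces each part to be an independent perfect dominating set, i.e.\ a perfect $1$-code. Indeed, if $v \in D_i$ then for every $j \neq i$ the set $D_j$ must contain at least one neighbour of $v$; since $v$ has only $n$ neighbours and there are exactly $n$ indices $j \neq i$, each neighbour of $v$ lies in a distinct $D_j$, and so $D_i$ is independent and every vertex outside $D_i$ has exactly one neighbour in $D_i$. Such a partition of $V(Q_n)$ is precisely a perfect binary $1$-error-correcting code, which by the sphere-packing bound exists only when $(n+1)\mid 2^n$, that is, when $n = 2^p - 1$. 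Consequently $\chi_i(Q_n) \ge n+2$ whenever $n \neq 2^p - 1$.

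For the matching upper bound I would proceed by induction on $n$ via the Cartesian product decomposition $Q_n = Q_m \cartp Q_{n-m}$ and apply the sufficient conditions on the factors that the paper develops in its subsequent sections. The base cases are supplied by the Pai et al.\ theorem cited above (the values $n = 2^p-2$, $n = 2^p+2^q-1$, and $n = 2^p+2^q-3$) together with the tight case $\chi_i(Q_n) = n+1$ for $n = 2^p - 1$; these already furnish a rich collection of small hypercubes equipped with optimal or near-optimal incidence colourings. For each remaining $n \neq 2^p - 1$ I would look for a split $n = m + (n-m)$ such that $Q_m$ and $Q_{n-m}$ admit incidence colourings whose palettes interact favourably (for instance via a locally injective homomorphism, as hinted at in the keywords), so that the Cartesian-product machinery yields an incidence colouring of $Q_n$ with exactly $\Delta(Q_n)+2 = n+2$ colours.

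The main obstacle, and very likely the reason the statement remains a conjecture, is to control the combinatorics of these decompositions uniformly in $n$: one must guarantee that for \emph{every} $n \neq 2^p - 1$ there is some split into two factor hypercubes whose incidence colourings meet the structural hypotheses of the product lemmas, and that those hypotheses then propagate through the induction. Residues of $n$ that avoid all the convenient Pai et al.\ forms, together with "small" exceptional values near powers of two, are the places where the argument is most likely to stall; I would expect to either strengthen the product lemmas to relax their hypotheses or handle finitely many sporadic values by ad hoc constructions, ultimately reducing the full conjecture to a finite (but possibly delicate) case check.
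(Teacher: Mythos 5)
Your lower bound is correct and matches the paper's: the counting argument showing that a partition of an $n$-regular graph into $n+1$ dominating sets forces each part to be a perfect $1$-code, combined with the sphere-packing condition $(n+1)\mid 2^n$, is exactly the reason the paper (citing MacWilliams--Sloane) can invoke Theorem~\ref{thm:partition} to conclude $\chi_i(Q_n)\ge n+2$ for $n\neq 2^p-1$.

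The upper bound, however, is where your proposal has a genuine gap: you sketch an induction over splits $Q_n=Q_m\cartp Q_{n-m}$, leave the choice of split unspecified, and conclude that controlling the decompositions uniformly in $n$ is the open obstacle. In fact no induction is needed and the statement is a theorem in this paper (Corollary~\ref{cor:hyper}). The key point you are missing is the precise form of the hypotheses of Theorem~\ref{thm:product}: the second factor $H$ does \emph{not} need to carry a good incidence coloring of its own --- it only needs to be a subgraph of some \emph{regular} $(\Delta+1)$-graph $H'$ satisfying the mild degree condition~\eqref{eq:deltas}. With that in hand there is one canonical split that always works: write $n=2^m-1+k$ with $m$ maximal, so $1\le k\le 2^m-1$ when $n\neq 2^p-1$, and set $G=Q_{2^m-1}$ (a $(\Delta+1)$-graph by the Hamming-code partition) and $H=Q_k$, which is a subgraph of the regular $(\Delta+1)$-graph $H'=Q_{2^m-1}$; the condition $\Delta(G)+1=2^m\ge 2^m-1-k=\Delta(H')-\Delta(H)$ holds trivially. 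A single application of Theorem~\ref{thm:product} then gives $\chi_i(Q_n)\le n+2$, with no base cases from Pai et al.\ and no sporadic values to check. Your instinct that both factors must come equipped with favourable colorings is what sends you toward an induction that you (rightly) cannot close; the asymmetry of Theorem~\ref{thm:product} is what makes the proof short.
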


Motivated by their research, we consider incidence coloring of Cartesian products of graphs; in particular we study
sufficient conditions for the factors such that their Cartesian product is a $(\Delta+2)$-graph.
Conjecture~\ref{conj:pai} has recently been confirmed also by Shiau, Shiau and Wang~\cite{ShiShiWan15}, 
who also considered Cartesian products of graphs, but in a different setting, not limiting to $(\Delta+2)$-graphs.

In Section~\ref{sec:1}, we show that if one of the factors is a $(\Delta+1)$-graph and the maximum degree of the second is not too small, 
Conjecture~\ref{conj:main} holds. As a corollary, Conjecture~\ref{conj:pai} is also answered in affirmative. In Section~\ref{sec:2}, we introduce
two classes of graphs, $2$-permutable and $2$-adjustable, such that the Cartesian product of factors from each of them is a $(\Delta+2)$-graph. In Section~\ref{sec:con}, 
we discuss further work and propose several problems and conjectures.

\section{Preliminaries}
\label{sec:prel}

In this section, we present additional terminology used in the paper.
The \textit{Cartesian product of graphs $G$ and $H$}, denoted by $G \cartp H$, is the graph with the vertex set $V(G) \times V(H)$ and edges between vertices $(u,v)$ and $(u',v')$ whenever
\begin{itemize}
\item $uu'\in E(G)$ and $v=v'$ (a \emph{$G$-edge}), or
\item $u=u'$ and $vv'\in E(H)$ (an \emph{$H$-edge}).
\end{itemize}
We call the graphs $G$ and $H$ the \textit{factor graphs}.
The \emph{$G$-f\mbox{}iber} with respect to $v\in V(H)$, denoted by $G_v$, is the copy of $G$ in $G \cartp H$ induced by the 
vertices with the second component $v$. Analogously, the \emph{$H$-f\mbox{}iber} with respect to $u \in V(G)$, 
denoted by $H_u$, is the copy of $H$ in $G \cartp H$ induced by the vertices with the first component $u$. 
An incidence of a vertex in $G \cartp H$ and a $G$-edge (resp. an $H$-edge) is called a \emph{$G$-incidence} 
(resp. an $H$-incidence). Clearly, $G$-incidences are in $G$-f\mbox{}ibers and $H$-incidences are in $H$-f\mbox{}ibers.

Let $c$ be an incidence coloring of a graph $G$. 
The \emph{$c$-spectrum} (or simply the \textit{spectrum}) of a vertex $v\in V(G)$ is the set $S_c(v)$ of all colors assigned 
to the incidences of the edges containing $v$; that is,
$$
	S_c(v)=\{c(v,uv), c(u,uv) \mid uv \in E(G)\}.
$$
Using the size of vertex spectra, a trivial lower bound for the incidence chromatic number of a graph $G$ is obtained:
$$
	\chi_i(G) \ge \min_c \max_{v \in V(G)} |S_c(v)|\,.
$$
Clearly, for every vertex $v$ of $G$, every incidence in $I(v) = \set{(v,uv) \ | \ uv \in E(G)}$ 
must have distinct colors assigned. Moreover, for every edge $vu$ the color of the incidence $(u, vu)$ must be different from all colors of $I(v)$. 
Thus, every spectrum $S_c(v)$ of a non-isolated vertex $v$ is of size at least $d(v) + 1$, implying that $\chi_i(G) \ge \Delta(G) + 1$, 
where $d(v)$ denotes the degree of $v$ and $\Delta(G)$ denotes the maximal degree in $G$. 
We distinguish two types of colors from a vertex's $v$ perspective: by $S_c^0(v)$ and $S_c^1(v)$ we denote the 
sets $\{c(v,uv) \mid uv \in E(G)\}$ and $\{c(u,uv) \mid uv \in E(G)\}$, respectively. 
Clearly, $S_c(v) = S_c^0(v) \mathop{\dot{\cup}} S_c^1(v)$ (disjoint union), $|S_c^0(v)| = d(v)$ and $|S_c^1(v)| \ge 1$ if $v$ is a non-isolated vertex.

In the sequel we will also use the following version of incidence colorings. 
A \textit{$(k,p)$-incidence coloring} of a graph $G$ is a $k$-incidence coloring $c$ of $G$ 
with $|S_c^1(v)| \le p$ for every $v \in V(G)$. The smallest $k$ for which a $(k,p)$-incidence 
coloring of a graph $G$ exists is denoted $\chi_{i,p}(G)$, and clearly, 
$\chi_i(G) \le \chi_{i,p}(G)$ for every positive integer $p$. 
We say that a graph is a \textit{$(k,p)$-graph} if it admits a $(k,p)$-incidence coloring. 

Observe that every regular $(\Delta+1)$-graph $G$ admits also a $(\Delta(G)+1,1)$-incidence coloring (see Observation~\ref{obs:reg}).
As noted by Sopena~\cite{Sop15}, a $(k,1)$-incidence coloring of a graph $G$ is equivalent to a 
vertex coloring of the square $G^2$, and so
\begin{equation}
	\label{eq:square}
	\chi_i(G) \le \chi_{i,1}(G) = \chi(G^2)\,.
\end{equation}
On the other hand, an irregular $(\Delta+1)$-graph $G$ does not necessarily admit a $(\Delta(G)+1,1)$-incidence coloring. For example, 
let $H$ be a cycle $C_5$ with a pendant edge, which is a $(\Delta+1)$-graph. 
It has $\chi_{i}(H) = \chi_{i,2}(H) = 4$, but $\chi_{i,1}(H) = 5$ (see Fig.~\ref{fig:C5e} for example). In fact, the difference $\chi_{i,1}(G) - \chi_{i}(G)$
can be arbitrarily large. Consider simply balanced complete bipartite graphs $K_{n,n}$, which are $(\Delta+2)$-graphs~\cite{ShiSun08},
whereas $\chi_{i,1}(K_{n,n}) = \chi(K_{n,n}^2) = 2n$. 
\begin{figure}[ht]
	$$
		\includegraphics[scale=0.8]{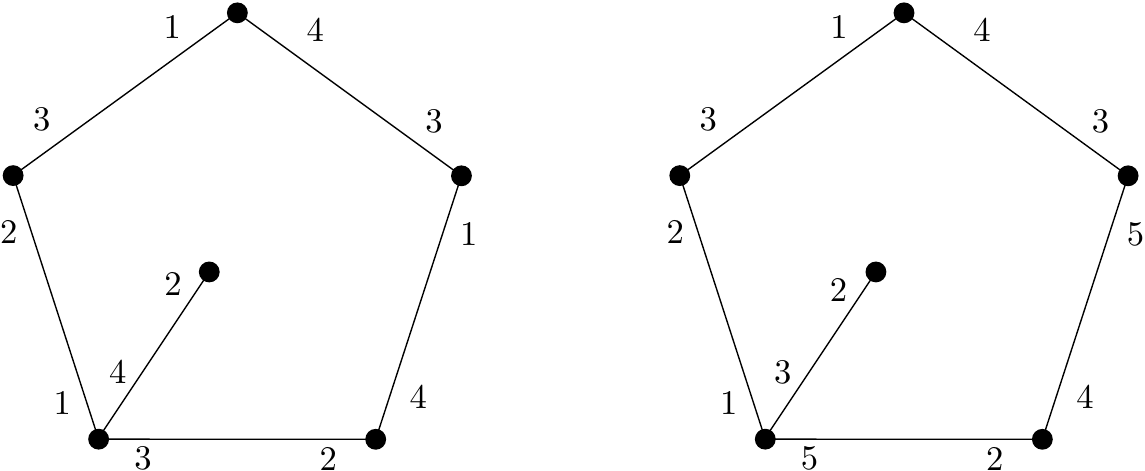}
	$$
	\caption{A $(4,2)$-incidence coloring (left) and a $(5,1)$-incidence coloring (right) of a cycle on five vertices with a pendant edge.}
	\label{fig:C5e}
\end{figure}

\section{Cartesian products with $(\Delta+1)$-graphs}
\label{sec:1}

In this section, we present some sufficient conditions for a Cartesian product of graphs to be a $(\Delta+2)$-graph 
when one of the factors is a $(\Delta+1)$-graph. 

\begin{theorem}
  \label{thm:product}
  Let $G$ be a ($\Delta+1$)-graph and let $H$ be a subgraph of a regular $(\Delta+1)$-graph $H'$ such that
  \begin{equation}\label{eq:deltas}
  	\Delta(G)+1 \ge \Delta(H')-\Delta(H).
  \end{equation}
  Then, 
  $$
      \chi_i(G \cartp H) \le \Delta(G \cartp H) + 2\,.
  $$
\end{theorem}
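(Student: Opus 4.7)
I would construct an incidence coloring of $G \cartp H$ using $N := \Delta(G)+\Delta(H)+2$ colors by combining an optimal incidence coloring of $G$ on every $G$-fiber with a per-$u$-shifted copy of an optimal incidence coloring of $H'$ on every $H$-fiber. First, I would fix an optimal incidence coloring $c_G$ of $G$ using colors in $\{1,\dots,\Delta(G)+1\}$, and an optimal incidence coloring $c_{H'}$ of $H'$ using $\Delta(H')+1$ colors embedded in $\{1,\dots,N\}$; this embedding is possible because the hypothesis $\Delta(G)+1 \ge \Delta(H')-\Delta(H)$ rearranges to $\Delta(H')+1 \le N$. Since $H'$ is a regular $(\Delta+1)$-graph, Observation~\ref{obs:reg} produces a color $c_v$ for every $v \in V(H')$ such that $c_{H'}(u,uv)=c_v$ for every edge $uv \in E(H')$, partitioning $V(H')$ into $\Delta(H')+1$ dominating classes.

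Next, for every $u \in V(G)$ I would pick a shift $f(u) \in \mathbb{Z}_N$ and set
\[ c^\ast\bigl((u,v),\, (u,v)(u',v)\bigr) := c_G(u,uu'), \qquad c^\ast\bigl((u,v),\, (u,v)(u,v')\bigr) := c_{v'} + f(u) \pmod{N}. \]
Each $G$-fiber then carries the coloring $c_G$ (valid) and each $H$-fiber $H_u \cong H$ carries $c_{H'}|_H$ shifted by $f(u)$ (also valid since $H \subseteq H'$ and a cyclic shift preserves incidence colorings). The remaining validity requirements are the cross-type adjacencies between a $G$-incidence and an $H$-incidence; enumerating the three adjacency types (a)--(c) from the definition and using Observation~\ref{obs:reg} to identify every ``incoming'' color along an $H$-edge at $(u,v)$ as $c_v + f(u) \bmod N$, these constraints reduce to the single requirement that, for every $u \in V(G)$,
\[ \bigl\{\, c_{v'} + f(u) \bmod N \;:\; v' \in V(H),\, v' \text{ non-isolated in } H \,\bigr\} \cap S_{c_G}(u) = \emptyset. \]

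The main obstacle will be exhibiting shifts $f(u)$ meeting this requirement for every $u$. At a maximum-degree vertex $u$ of $G$ one has $|S_{c_G}(u)| = \Delta(G)+1$, so its complement in $\{1,\dots,N\}$ has size exactly $\Delta(H)+1$, which is the tightest case. I would attack this step by a pigeonhole or Hall-type matching on $\mathbb{Z}_N$, exploiting the $N$ candidate shifts together with the structural slack that at each $v \in V(H)$ at least $k := \Delta(H')-\Delta(H)$ classes of $c_{H'}$ are unused by the incidences of $v$ in $H$ (since $v$ has at most $\Delta(H)$ neighbors in $H$, while $V(H')$ has $\Delta(H')+1$ classes). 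If shifts of $c_{H'}$ alone prove insufficient, I would enrich the construction by additionally shifting $c_G$ on every $G$-fiber by a per-$v$ amount $g(v) \in \mathbb{Z}_N$, keeping both fiber colorings valid while adding a second family of parameters to close any remaining gap.
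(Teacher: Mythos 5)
Your reduction of the cross-adjacencies to the condition $\bigl(\{c_{v'}+f(u) : v'\text{ non-isolated in }H\}\bigr)\cap S_{c_G}(u)=\emptyset$ is essentially right, but this condition is unsatisfiable in general, so the core of your construction fails. The obstruction is a count: the incoming colors $c_{v'}$ of the non-isolated vertices of $H$ can take up to $\Delta(H')+1$ distinct values across the fiber $H_u$ (they partition $V(H')$ into $\Delta(H')+1$ classes, and $H$ may meet all of them), while the complement of $S_{c_G}(u)$ in your palette of $N=\Delta(G)+\Delta(H)+2$ colors has size only $\Delta(H)+1$ at a maximum-degree vertex $u$. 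Whenever $\Delta(H')>\Delta(H)$ and $H$ hits more than $\Delta(H)+1$ classes, no shift $f(u)$ exists. Concretely, take $G=K_2$ and $H=P_4\subseteq H'=K_4$: condition \eqref{eq:deltas} holds, $N=5$, $S_{c_G}(u)$ has size $2$, but all four incoming colors of $K_4$ occur among the non-isolated vertices of $P_4$, and a $4$-element set cannot be translated into a $3$-element set in $\mathbb{Z}_5$. So no pigeonhole or Hall argument can rescue the per-$u$ shift: the per-vertex slack you correctly identify (only $d_H(v)+1\le\Delta(H)+1$ colors appear locally at each $v$) is destroyed by insisting on a single shift for the whole fiber $H_u$.

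The paper's proof exploits exactly that local slack, but by adjusting the $G$-fibers per vertex $v$ of $H$ rather than the $H$-fibers per vertex $u$ of $G$, and by a targeted injection rather than a translation. It fixes the $H'$-coloring $d'$ once (restricted to $H$, identical in every $H$-fiber, no shifts), arranges the two palettes to overlap in a set $C$ of exactly $\Delta(H')-\Delta(H)$ colors, and then in the fiber $G_v$ replaces each overlapping color of $c_G$ via an arbitrary injection $g_v\colon C\to M(v)$ into the colors of the $H'$-palette missing from the local spectrum $S_d(v)$; Observation~\ref{obs:reg} guarantees $|M(v)|\ge|C|$, and condition \eqref{eq:deltas} guarantees the offset making $|C|=\Delta(H')-\Delta(H)$ is nonnegative. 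Your fallback idea of a per-$v$ adjustment of $c_G$ points in the right direction, but a uniform cyclic shift by $g(v)$ is still the wrong operation: it would require $S_d(v)$ to sit inside a cyclic interval of length $\Delta(H)+1$, which need not hold. The adjustment must be local to $v$, confined to the $\Delta(H')-\Delta(H)$ overlapping colors, and free to send them to an arbitrary subset of $M(v)$ rather than a translate.
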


\begin{proof}
	Let $c$ be an optimal incidence coloring of $G$ with colors from 
	$A=\{0,\dots,\Delta(G)\}$ and let $d'$ be an optimal incidence coloring of $H'$ 
	with colors from $B=\{t,\dots,\Delta(H')+t\}$, where 
	$$
		t=\Delta(G)+\Delta(H)-\Delta(H')+1
	$$
	is an \emph{offset} of the coloring $d'$. Note that $\Delta(H')+t=\Delta(G \cartp H)+1$ and \eqref{eq:deltas} implies $t\ge 0$. Since $d'$ is optimal, 
	every vertex $v$ of $H'$ has a full spectrum, i.e. $S_{d'}(v)=B$. Let $d$ denote the restriction of $d'$ to the subgraph $H$. Clearly, $d$ is an 
	incidence coloring of $H$ with at most $|B|=\Delta(H')+1$ colors. 

	Let $C=A\cap B=\{t,\dots,\Delta(G)\}$ denote the set of \emph{overlapping colors} between the colorings $c$ and $d$. Note that 
	$$
		|C|=\Delta(G)-t+1=\Delta(H')-\Delta(H).
	$$
	Since $H$ is a subgraph of a regular ($\Delta+1$)-graph $H'$ and $d$ is a restriction of an optimal coloring $d'$, it follows by Observation~\ref{obs:reg} that the 
	$d$-spectrum of each vertex $v$ of $H$ is minimal, i.e.
	$$
		|S_d(v)|=d_H(v)+1\le \Delta(H)+1.
	$$
	This ensures that for the set $M(v)=B\setminus S_d(v)$ of \emph{missing colors} at the vertex $v$ we have
	$$
		|M(v)|=|B|-|S_d(v)|\ge \Delta(H')-\Delta(H)=|C|.
	$$
	The first equality holds since obviously $S_d(v)\subseteq B$. 
	Hence, there exists an injective mapping $g_v\colon C \to M(v)$. 
	We may choose such $g_v$ arbitrarily.

	Then, an incidence coloring $f$ of $G \cartp H$ with at most $\Delta(G \cartp H)+2$ colors is constructed as follows. 	
	For every pair of vertices 
	$u\in V(G)$, $v\in V(H)$, and edges $uu'\in E(G)$, $vv'\in E(H)$ we define
	\begin{align*}
	f\big((u,v), (u,v)(u,v')\big)&=d(v,vv'),\quad\text{and} \\
	f\big((u,v), (u,v)(u',v)\big)&=
	\begin{cases}
		\ c(u, uu') & \text{if }c(u,uu')\notin C,\\
		\ g_v(c(u, uu')) & \text{if }c(u,uu')\in C. \end{cases}
	\end{align*}
	Informally, an $H$-incidence of the vertex $(u,v)$ receives in $G \cartp H$ the same color as in the coloring of $H$, and a 
	$G$-incidence receives the same color as in the coloring of $G$ if it is a non-overlapping color, otherwise it receives one 
	of the missing colors of $d$ at the vertex $v$ determined by $g_v$ as a replacement instead of this overlapping color. 

	It remains to verify that the coloring $f$ is a valid incidence coloring. First, clearly no two $H$-incidences in the same $H$-f\mbox{}iber 
	collide as their colors are inherited from the incidence coloring $d$ of $H$. 

	Second, no two $G$-incidences in the same $G$-f\mbox{}iber with respect to $v\in V(H)$ collide as their colors are inherited 
	from the coloring $c$ of $G$ up to replacing the overlapping colors via the (injective) mapping $g_v$. Observe also that two $G$-incidences 
	from different $G$-f\mbox{}ibers (as well as two $H$-incidences from different $H$-f\mbox{}ibers) never collide as they are simply non-adjacent. 
	\begin{figure}[ht]
	\centerline{\includegraphics[scale=0.8]{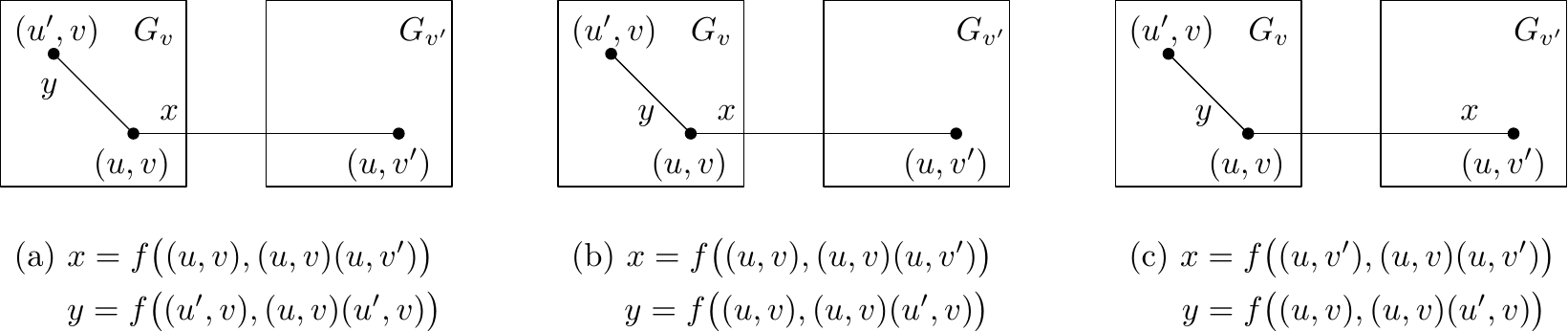}}
	\caption{Interference between $G$-incidences and $H$-incidences in 
	$G \cartp H$: (a) adjacent incidences at different vertices from the same $G$-f\mbox{}iber, (b) adjacent incidences at the same vertex, 
	(c) adjacent incidences at different vertices in different $G$-f\mbox{}ibers.}
	\label{fig:GHinc}
	\end{figure}

	Last, we consider interference between $G$-incidences and $H$-incidences. There are three cases as depicted in Fig.~\ref{fig:GHinc}. 
	In all these cases the color $x$ of the $H$-incidence is included in the $d$-spectrum of $v$, i.e. $x\in S_d(v)$. 
	Furthermore, by the definition of $f$, the color $y$ of the $G$-incidence is either a non-overlapping color from $A$ or 
	one of the missing colors from $M(v)$, hence consequently, $y\notin S_d(v)$. Therefore the colors $x$ and $y$ are distinct as well.
\end{proof}

It is worth noting that the condition \eqref{eq:deltas} can be replaced as follows since every graph is a subgraph of the complete graph, which is a regular $(\Delta+1)$-graph.
\begin{corollary}
	\label{cor:deg}
	Let $G$ be a ($\Delta+1$)-graph and let $H$ be a graph such that
	$$ 
		\Delta(G)+1 \ge |V(H)|-1-\Delta(H)\,.
	$$
	Then, $\chi_i(G \cartp H) \le \Delta(G \cartp H) + 2$.
\end{corollary}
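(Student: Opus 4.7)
The plan is to derive the corollary directly from Theorem~\ref{thm:product} by choosing the ambient graph $H'$ to be the complete graph on $V(H)$. Concretely, I would set $H' = K_n$ where $n = |V(H)|$ and embed $H$ into $H'$ as a spanning subgraph.

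The verification splits into three short checks. First, $K_n$ is regular of degree $n-1$, so $\Delta(H') = n - 1 = |V(H)| - 1$. Second, $K_n$ is a $(\Delta+1)$-graph: a proper edge-coloring of $K_n$ with the appropriate number of colors (together with a color for the ``identity'' incidences on each vertex) yields $\chi_i(K_n) = n = \Delta(K_n)+1$, so $K_n$ is a regular $(\Delta+1)$-graph as required by the hypothesis of Theorem~\ref{thm:product}. Third, the degree inequality \eqref{eq:deltas} in Theorem~\ref{thm:product} becomes $\Delta(G) + 1 \ge \Delta(H') - \Delta(H) = |V(H)| - 1 - \Delta(H)$, which is exactly the hypothesis of the corollary.

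With these three observations in place, the hypotheses of Theorem~\ref{thm:product} are satisfied for the pair $(G, H)$ via $H' = K_n$, and the theorem immediately yields $\chi_i(G \cartp H) \le \Delta(G \cartp H) + 2$. There is no real obstacle here; the only point worth articulating is the standard fact that $K_n$ is a regular $(\Delta+1)$-graph, which is the ``worst case'' supergraph one can always fall back on, and which makes the bound $|V(H)| - 1 - \Delta(H)$ appear naturally as $\Delta(K_n) - \Delta(H)$.
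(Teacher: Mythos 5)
Your proposal is correct and is exactly the paper's argument: the authors obtain Corollary~\ref{cor:deg} from Theorem~\ref{thm:product} by taking $H'$ to be the complete graph on $V(H)$, which is a regular $(\Delta+1)$-graph, so that $\Delta(H')-\Delta(H)=|V(H)|-1-\Delta(H)$. The only cosmetic point is that the standard witness for $\chi_i(K_n)=n$ is the coloring $c(u,uv)=\ell(v)$ for a labelling $\ell$ of the vertices (equivalently, Theorem~\ref{thm:partition} with the partition into singletons), rather than a proper edge-coloring, but this does not affect the argument.
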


From Theorem~\ref{thm:product}, we can derive the result for hypercubes.
\begin{corollary}
	\label{cor:hyper}
    For every $n\ge 1$,
    $$
		\chi_i(Q_n) = 
		\begin{cases}
			\ n + 1 & \text{if }n = 2^m - 1\text{ for some integer }m\ge 0,\\
			\ n + 2 & \text{otherwise.} 
		\end{cases}
    $$
\end{corollary}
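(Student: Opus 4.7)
The plan is to split the argument according to whether $n+1$ is a power of two and handle the lower and upper bounds separately.

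For the lower bound when $n$ is not of the form $2^m-1$, I would invoke Theorem~\ref{thm:partition}. Since $Q_n$ is $n$-regular, having $\chi_i(Q_n) = n+1$ would force $V(Q_n)$ to decompose into $n+1$ dominating sets. Each closed neighborhood in $Q_n$ has size exactly $n+1$, so every class in such a partition must meet every closed neighborhood in exactly one vertex; hence every class is a perfect dominating set of cardinality $2^n/(n+1)$. This requires $(n+1)\mid 2^n$, i.e.\ $n+1$ is a power of two, contradicting the assumption. Together with the trivial bound $\chi_i(G)\ge \Delta(G)+1$, this gives $\chi_i(Q_n)\ge n+2$.

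For the upper bound, the case $n = 2^m-1$ is already covered by part $(i)$ of the cited theorem of Pai et al., so I would simply invoke it. For the remaining case, let $m$ be the largest integer with $a := 2^m-1 \le n$ and set $b := n-a$. Then $1\le b \le a$, so $Q_b$ embeds as a subcube of $Q_a$ (by fixing the last $a-b$ coordinates). I would then apply Theorem~\ref{thm:product} to $Q_n \cong Q_a\cartp Q_b$ with $G = Q_a$ (a regular $(\Delta+1)$-graph, again by Pai et al.) and $H = Q_b$, using $H' = Q_a$ as the ambient regular $(\Delta+1)$-graph containing $H$. Condition~\eqref{eq:deltas} then reduces to $a+1 \ge a-b$, which is trivial. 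Since $\Delta(Q_n) = n$, the theorem yields $\chi_i(Q_n) \le n+2$, matching the lower bound.

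The genuinely hard step here is the base case $n = 2^m-1$, but this is already in the literature: the Hamming code of length $2^m-1$ provides a perfect dominating set whose translates partition $\{0,1\}^n$ into $n+1$ classes, and Theorem~\ref{thm:partition} converts this into the required incidence coloring. Conditional on that input, everything fits into Theorem~\ref{thm:product} with essentially no slack; the only point requiring care is choosing $m$ maximal so that $b\in [1,a]$ and hence $Q_b \subseteq Q_a$, which simultaneously guarantees both that $H'$ is a regular $(\Delta+1)$-graph of the right size and that \eqref{eq:deltas} holds automatically.
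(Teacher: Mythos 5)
Your proof is correct and follows essentially the same route as the paper: the identical decomposition $Q_n \cong Q_{2^m-1} \cartp Q_k$ with $0\le k\le 2^m-1$ fed into Theorem~\ref{thm:product} (taking $H'=Q_{2^m-1}$), with Theorem~\ref{thm:partition} supplying both the base case $n=2^m-1$ and the lower bound $n+2$ otherwise. The only cosmetic difference is that you derive the non-existence of a partition of $V(Q_n)$ into $n+1$ dominating sets from the sphere-packing count $(n+1)\mid 2^n$, whereas the paper simply cites the coding-theory literature for this fact.
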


\begin{proof}
	Let $n=2^m-1+k$ where $m$ and $k$ are integers such that $m\ge 0$ and $0 \le k \le 2^m-1$. 
	If $k=0$, then there exists a partition of $V(Q_n)$ into $n+1$ dominating sets obtained by a well-known Hamming code and its cosets~\cite{MacSlo77}. 
	Hence by Theorem~\ref{thm:partition}, $\chi_i(Q_n)=n+1$.
    
	If $k\ge 1$, we may represent $Q_n$ as the Cartesian product $G \cartp H$ of $G=Q_{2^m-1}$ and $H=Q_{k}$. 
	Since $G$ is a $(\Delta+1)$-graph, $H$ is a subgraph of a regular $(\Delta+1)$-graph $H'=Q_{2^m-1}$, and 
	$$
		2^m=\Delta(G)+1\ge \Delta(H')-\Delta(H)=2^m-1-k
	$$
	satisfies the condition~\eqref{eq:deltas}, we have by Theorem~\ref{thm:product} that $\chi_i(Q_n)\le \Delta(Q_n)+2=n+2$. 
	Finally, as $Q_n$ in this case does not have a partition into $n+1$ dominating sets~\cite{MacSlo77}, 
	this upper bound is tight by Theorem~\ref{thm:partition}; that is, $\chi_i(Q_n)=n+2$.    
\end{proof}

\section{Cartesian products with $2$-permutable graphs}
\label{sec:2}

In this section, we introduce two classes of graphs, $2$-permutable and $2$-adjustable graphs. We determine their basic properties
and show that a Cartesian product of a $2$-permutable graph and a $2$-adjustable graph is a $(\Delta+2)$-graph.

A \textit{homomorphism} $f$ of a graph $G$ to a graph $H$ is a mapping
$$
	f\,:\,V(G)\rightarrow V(H)
$$ 
such that if $uv \in E(G)$, then $f(u)f(v) \in E(H)$. 
A homomorphism is \textit{locally injective} if 
$f(u) \neq f(w)$, for every $v \in V(G)$ and every pair $vu,vw \in E(G)$;
i.e. $f$ is injective on the set $N(v)$ of neighbors of any vertex $v$
(see~\cite{FiaKra08} for more details).
A key property of locally injective homomorphisms in the context of incidence colorings is that they preserve adjacencies of incidences.
In his thesis, Duffy~\cite{Duf15} deduced the following.
\begin{theorem}[Duffy, 2015]
	\label{thm:duf}
	Let $G$ and $H$ be simple graphs such that $G$ admits a locally injective homomorphism to $H$. Then
	$$
		\chi_i(G) \le \chi_i(H)\,.
	$$
\end{theorem}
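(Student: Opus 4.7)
The plan is to transport an optimal incidence coloring of $H$ back to $G$ along the locally injective homomorphism $f\colon V(G)\to V(H)$. Fix an optimal incidence coloring $c\colon I(H)\to\{1,\dots,\chi_i(H)\}$, and define a candidate coloring $c'$ of $I(G)$ by
$$
  c'(v, uv) \;=\; c\bigl(f(v),\, f(u)f(v)\bigr).
$$
This is well-defined: since $f$ is a homomorphism, $f(u)f(v)\in E(H)$ whenever $uv\in E(G)$, and because $H$ is simple (hence loopless), $f(u)\ne f(v)$, so $\bigl(f(v),f(u)f(v)\bigr)$ is a genuine incidence of $H$. By construction $c'$ uses at most $\chi_i(H)$ colors, so the task reduces entirely to verifying that $c'$ is proper.

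To show properness I take two adjacent incidences of $G$ and check that their images are still adjacent incidences of $H$; the conflict is then forbidden by $c$. The verification splits into the three adjacency types $(a)$, $(b)$, $(c)$. Case $(b)$, where both incidences share the edge $uv$, is immediate since both images use the edge $f(u)f(v)$ and are therefore adjacent in $H$ by rule $(b)$. Case $(a)$, where the incidences $(v,vu)$ and $(v,vw)$ share the vertex $v$, uses local injectivity at $v$: since $u,w\in N(v)$ with $u\ne w$, one has $f(u)\ne f(w)$, so the images sit at $f(v)$ on two distinct edges and are adjacent in $H$ by rule $(a)$. Case $(c)$, where the incidences are $(v,vu)$ and $(u,uw)$ along the edge $vu$ with $w\ne v$, uses local injectivity at $u$: the neighbors $v,w$ of $u$ satisfy $f(v)\ne f(w)$, so the images lie at the distinct vertices $f(v),f(u)$ of $H$ joined by the common edge $f(u)f(v)$, which realizes adjacency rule $(c)$ in $H$.

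The only subtle point, and hence the main obstacle, is ensuring that in every case the image pair consists of two genuinely distinct incidences of $H$ that are actually adjacent there, rather than collapsing to a single incidence (which would hide a potential collision). Simpleness of $H$ handles the endpoints via the loopless hypothesis, while local injectivity, invoked at the correct vertex in cases $(a)$ and $(c)$ — namely at the shared endpoint in $(a)$ and at the endpoint carrying the second incidence in $(c)$ — rules out any coincidence of image edges. Once properness is established, $c'$ is a valid incidence coloring of $G$ using at most $\chi_i(H)$ colors, yielding the claimed bound $\chi_i(G)\le\chi_i(H)$.
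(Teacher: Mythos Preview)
Your proof is correct and is the natural pull-back argument: transport an optimal incidence coloring of $H$ along the locally injective homomorphism and verify that each adjacency type in $G$ maps to an adjacency (of genuinely distinct incidences) in $H$. Note, however, that the paper does not supply its own proof of this statement --- it is quoted as a result from Duffy's thesis~\cite{Duf15} --- so there is nothing in the paper to compare your argument against; your write-up is precisely the expected proof.
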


Using the equality in~\eqref{eq:square} and the properties of locally injective homomorphisms,
even a stronger statement holds, in the case when the graph $H$ from Theorem~\ref{thm:duf} is a complete graph. 
\begin{proposition}
	A graph $G$ admits a $(k,1)$-incidence coloring if and only if it admits a locally injective homomorphism to $K_{k}$.
\end{proposition}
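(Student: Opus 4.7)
The plan is to reduce the statement to equation~(\ref{eq:square}) by identifying locally injective homomorphisms into complete graphs with proper vertex colorings of $G^2$. The pivotal observation is that, since every two distinct vertices of $K_k$ are adjacent, a map $f\colon V(G)\to V(K_k)$ is a locally injective homomorphism precisely when $f(u)\ne f(v)$ for every pair of vertices at graph distance at most two in $G$: the homomorphism condition covers pairs at distance one (as $K_k$ is loopless), and local injectivity at every vertex covers pairs at distance two. This is exactly the condition that $f$ is a proper $k$-coloring of $G^2$.

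Given this observation, the proposition follows from the chain
\[
G \text{ is a }(k,1)\text{-graph} \iff \chi_{i,1}(G)\le k \iff \chi(G^2)\le k \iff \exists\, f\colon G\to K_k \text{ locally injective},
\]
where the middle equivalence is equation~(\ref{eq:square}) and the outer ones are the respective definitions plus the observation above.

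For a self-contained argument, I would also spell out the two translations explicitly. Starting from a $(k,1)$-incidence coloring $c$, define $f(v)$ to be the unique element of $S_c^1(v)$ when $v$ is non-isolated (assigning isolated vertices arbitrarily); the conditions on adjacent incidences of types $(a)$ and $(b)$ then translate directly into local injectivity at $v$ and the homomorphism condition on the edge $uv$. Conversely, from a locally injective homomorphism $f$, set $c(u,uv):=f(v)$; a short case analysis on the three types of adjacent incidences, using $f(u)\ne f(v)$ on any edge $uv$ and $f(u)\ne f(w)$ for distinct neighbors $u,w$ of $v$, verifies that $c$ is a valid incidence coloring, and by construction $|S_c^1(v)|\le 1$ for every $v$.

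The main obstacle is essentially none: the only content is recognizing that "locally injective homomorphism to $K_k$" is precisely the two-distance coloring condition on $G^2$, after which (\ref{eq:square}) closes the equivalence.
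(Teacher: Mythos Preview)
Your proposal is correct and follows exactly the approach the paper indicates: invoke equation~(\ref{eq:square}) together with the observation that a locally injective homomorphism into $K_k$ is nothing other than a proper $k$-coloring of $G^2$. The paper gives no further details, so your explicit translation between $(k,1)$-incidence colorings and locally injective homomorphisms is a welcome elaboration rather than a deviation.
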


On the other hand, we do not know what happens in the case with $p \ge 2$.
\begin{question}
	Does, for a given $k$ and $p \ge 2$, a finite family of graphs $\mathcal{I}_{k,p}$ exist such that
	every connected $(k,p)$-graph admits a locally injective homomorphism to some graph $I \in \mathcal{I}_{k,p}$?
\end{question}

Let $K_{2n}^{-}$ be a complete graph on $2n$ vertices without a perfect matching.
A connected $2d$-regular graph $G$ is \textit{$2$-permutable} if it admits a locally injective homomorphism to $K_{2d+2}^-$.
This in particular means that $G$ is $(2d + 2)$-partite (with partition sets $P_1,\dots,P_{2d+2}$),
and for every $i$, $1 \le i \le 2d+2$, there exists $\overline{i}$ such that there are no edges between $P_i$ and $P_{\overline{i}}$.
Moreover, every $v \in P_i$ has exactly one neighbor in $P_j$, for every $j$, $j \notin \set{i,\overline{i}}$.

By the definition it immediately follows that every $2$-permutable graph $G$ is a $(\Delta+2,1)$-graph. 
Indeed, we obtain a coloring $c$ of $G$ in the following way: 
let $v_1,\dots,v_{\Delta(G)+2}$ be the vertices of $K_{\Delta(G)+2}^-$ and let $h$ be a locally injective homomorphism 
from $G$ to $K_{\Delta(G)+2}^-$. For every incidence $(u,uw) \in I(G)$ such that $h(w)=v_j$,  
let $c((u,uw)) = j$. Hence, for a vertex $u \in G$ such that $h(u) = v_j$, we have $S_c^1(u) = \set{j}$ and 
there is precisely one color $\mu_c(u) = \overline{j}$ missing in the spectrum $S_c(u)$.

The simplest examples of $2$-permutable graphs are cycles on $4k$ vertices. An exhaustive computer search showed that there are precisely
$13$ non-isomorphic $4$-regular graphs of order $12$ (out of $1544$) which are $2$-permutable.
On the other hand, there exist $(\Delta+2,1)$-graphs which are not $2$-permutable; consider e.g. a cycle on seven vertices.
\begin{theorem}
	\label{thm:prism}
	Let $G$ be a $2$-permutable graph. Then
	$$
		\chi_i(G \cartp K_2) = \Delta(G \cartp K_2) + 1\,.
	$$
\end{theorem}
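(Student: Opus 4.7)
The plan is to use Theorem~\ref{thm:partition}: since $G \cartp K_2$ is $(2d+1)$-regular when $G$ is $2d$-regular, it suffices to exhibit a partition of $V(G \cartp K_2)$ into $2d+2$ dominating sets, which will also give the matching lower bound $\chi_i \ge \Delta + 1$ for free.

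First I would unpack what $2$-permutability gives us structurally. Writing $V(K_2) = \{0,1\}$, the graph $G$ comes with a partition $P_1, \dots, P_{2d+2}$ and an involution $i \mapsto \overline{i}$ (a fixed-point-free bijection, since $\overline{i} \ne i$) such that there are no edges between $P_i$ and $P_{\overline{i}}$, while every $v \in P_i$ has exactly one neighbor in each $P_j$ with $j \notin \{i,\overline{i}\}$. In particular, $P_j$ by itself fails to dominate exactly the vertices of $P_{\overline{j}}$ in $G$, so the second factor $K_2$ is there to fix precisely this defect.

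The natural candidate partition for $G \cartp K_2$ is therefore the twisted family
\begin{equation*}
D_j \;=\; \bigl(P_j \times \{0\}\bigr) \;\cup\; \bigl(P_{\overline{j}} \times \{1\}\bigr), \qquad j = 1, \dots, 2d+2.
\end{equation*}
Disjointness and covering follow immediately from $\{P_j\}_j$ being a partition of $V(G)$ and $\overline{\cdot}$ being a bijection on indices; so we get $2d+2$ classes summing to $V(G \cartp K_2)$.

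The bulk of the argument is a short case analysis showing each $D_j$ dominates. Take a vertex $(v,\varepsilon)$ with $v \in P_i$; its neighbors are $(u,\varepsilon)$ for $u \in N_G(v)$ together with $(v, 1-\varepsilon)$. For $\varepsilon = 0$ one checks that $(v,0) \in D_i$ by definition, for $j \notin \{i,\overline{i}\}$ one of the neighbors in the $G$-fiber lies in $P_j \times \{0\} \subseteq D_j$, and the remaining missing index $j = \overline{i}$ is rescued by the $K_2$-edge to $(v,1)$, which lies in $P_i \times \{1\} = P_{\overline{\overline{i}}} \times \{1\} \subseteq D_{\overline{i}}$. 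The case $\varepsilon = 1$ is symmetric, using that $v \in P_i$ has a neighbor in $P_{\overline{j}}$ precisely when $\overline{j} \notin \{i,\overline{i}\}$, i.e.\ when $j \notin \{i,\overline{i}\}$, and that $(v,0) \in D_i$ handles $j = i$ while $(v,1)$ itself lies in $D_{\overline{i}}$.

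Applying Theorem~\ref{thm:partition} to the regular graph $G \cartp K_2$ with this partition then yields $\chi_i(G \cartp K_2) = \Delta(G \cartp K_2) + 1$. I do not foresee a real obstacle here; the whole content is finding the twisted partition $D_j$, and once written down the verification is bookkeeping. The only subtle point is to remember that $\overline{\cdot}$ has no fixed points, so the $D_j$'s are genuinely $2d+2$ distinct sets rather than collapsing in pairs.
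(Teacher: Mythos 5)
Your proof is correct, but it takes a genuinely different route from the paper. The paper argues directly at the level of incidences: it takes the $(\Delta(G)+2,1)$-incidence coloring $c$ induced by the locally injective homomorphism to $K_{2d+2}^-$, forms $\overline{c}$ by swapping each color $i$ with $\overline{i}$, colors the two $G$-fibers by $c$ and $\overline{c}$ respectively, colors the $K_2$-incidences at $v_i^1,v_i^2$ by $\overline{i}$ and $i$, and checks that no adjacent incidences collide. You instead exhibit the twisted partition $D_j=(P_j\times\{0\})\cup(P_{\overline{j}}\times\{1\})$ of $V(G\cartp K_2)$ into $2d+2$ dominating sets and invoke Sun's characterization (Theorem~\ref{thm:partition}) for the $(2d+1)$-regular graph $G\cartp K_2$; your case analysis for domination is sound (the only index $P_j$ fails to dominate in a $G$-fiber is $\overline{j}$, and the cross edge repairs exactly that), and your remark that $\overline{\cdot}$ is a fixed-point-free involution is the right thing to check. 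The two arguments encode the same combinatorial object --- your $D_j$ is precisely the set of vertices whose incoming incidences receive color $j$ in the paper's coloring, cf.\ Observation~\ref{obs:reg} --- but your version outsources the incidence-adjacency verification to Theorem~\ref{thm:partition} and gets the lower bound from its ``only if'' direction, which is arguably cleaner. What the paper's explicit construction buys is reusability: the colorings $c$ and $\overline{c}$ built in its proof are invoked verbatim in the proof of Theorem~\ref{thm:peradj}, so if you adopted your version you would still need to spell out that pair of colorings there.
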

\begin{proof}
	Let $G_1$ and $G_2$ be the two $G$-f\mbox{}ibers of $G \cartp K_2$, and let $c$ be a $(\Delta(G)+2,1)$-incidence coloring of $G$ as described above.
	Let $\overline{c}$ be a coloring obtained from $c$ by replacing every color $i$ by the color $\overline{i}$. Color the incidences of $G_1$
	by the colors given by $c$, and the incidences of $G_2$ by the colors given by $\overline{c}$. We complete the coloring of $G \cartp K_2$ by 
	coloring the $K_2$-incidences. For every $v_i \in G$ let $v_i^1$ and $v_i^2$ be the vertex associated with $v_i$ in $G_1$ and $G_2$, respectively.
	Color $(v_i^1,v_i^1v_i^2)$ by $\overline{i}$, and $(v_i^2,v_i^1v_i^2)$ by $i$. As $\overline{i}$ is not in $S_c(v_i^1)$ and $i$ is not in 
	$S_{\overline{i}}(v_i^2)$, the obtained coloring is indeed a $(\Delta(G \cartp K_2)+1)$-incidence coloring of $G \cartp K_2$.
\end{proof}
Note that the inverse statement of Theorem~\ref{thm:prism} does not hold; a Cartesian product of the dodecahedron and $K_2$ is a $(\Delta+1)$-graph,
while the dodecahedron is not $2$-permutable, as it is a cubic graph. However, it does hold if $G$ is $2$-regular.
\begin{proposition}
	A connected $2$-regular graph $G$ is $2$-permutable if and only if
	$$
		\chi_i(G \cartp K_2) = \Delta(G \cartp K_2) + 1\,.
	$$
\end{proposition}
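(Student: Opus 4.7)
The key structural remark is that $K_4^-$, the complete graph on four vertices minus a perfect matching, is isomorphic to the $4$-cycle $C_4$. Since every connected $2$-regular graph is a cycle $C_n$, the graph $G$ is $2$-permutable if and only if there exists a locally injective homomorphism $C_n \to C_4$. On a cycle, local injectivity means that the two neighbors of every vertex are sent to distinct vertices of $C_4$, so such a homomorphism must advance consistently around $C_4$ at each step; it closes up cyclically precisely when $4 \mid n$. Thus the first step is to reduce the proposition to the equivalence
\[
\chi_i(C_n \cartp K_2) = \Delta(C_n \cartp K_2) + 1 \ \Longleftrightarrow\ 4 \mid n.
\]

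The forward implication (from $2$-permutable to $\chi_i=\Delta+1$) is immediate from Theorem~\ref{thm:prism} together with the trivial lower bound $\chi_i(H) \ge \Delta(H)+1$ applied to $H = C_n \cartp K_2$. So the substantive work is the converse: assuming $\chi_i(C_n \cartp K_2) = 4$, show $4 \mid n$.

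For this, I would invoke Theorem~\ref{thm:partition}: since $C_n \cartp K_2$ is $3$-regular, the assumption is equivalent to the existence of a $4$-coloring $c$ of its $2n$ vertices in which every closed neighborhood is rainbow. Label the vertices $v_i^j$ for $i \in \mathbb{Z}_n$ and $j \in \{0,1\}$, and set $P_i = \{c(v_i^0), c(v_i^1)\}$, a $2$-element subset of $\{1,2,3,4\}$. The rainbow condition at $v_i^0$ forces $\{c(v_{i-1}^0), c(v_{i+1}^0)\}$ to equal the complement $\overline{P_i}$, and symmetrically at $v_i^1$. This yields $P_{i-1} = P_{i+1} = \overline{P_i}$, so the sequence $P_0, P_1, \dots, P_{n-1}$ alternates between a fixed pair $A$ and its complement $\overline{A}$; cyclic closure forces $n$ to be even. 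Restricting to even indices, the sequence $c(v_0^0), c(v_2^0), \dots, c(v_{n-2}^0)$ lives in the $2$-element set $A$, and the rainbow condition at the odd-indexed vertices $v_{2i+1}^0$ forces consecutive terms of this subsequence to differ. Hence it is an alternating sequence in $A$ that must close cyclically after $n/2$ steps, giving $4 \mid n$.

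The main obstacle is the bookkeeping in this last stage: the initial deduction $P_{i-1} = P_{i+1} = \overline{P_i}$ is the pivot, and once the pair-sequence alternation is established, everything reduces to the elementary fact that a $2$-coloring of a cycle exists iff the cycle has even length. Nothing here is delicate beyond careful indexing, so a clean write-up should be short.
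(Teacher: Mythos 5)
Your proof is correct, and it reaches the same destination as the paper's but via a different key lemma. For necessity (the only nontrivial direction, as both you and the paper dispose of sufficiency by Theorem~\ref{thm:prism}), you pass through Theorem~\ref{thm:partition}: the optimal coloring of the $3$-regular prism becomes a partition of $V(C_n \cartp K_2)$ into four dominating sets, i.e.\ a vertex $4$-coloring with rainbow closed neighborhoods, which you then analyze by elementary index-chasing. The paper instead works with the incidence coloring itself: it partitions $V(G)$ according to the color of the $K_2$-incidence $(v^2,v^1v^2)$ and invokes Observation~\ref{obs:reg} to conclude in one line that every four consecutive vertices of the cycle fall into distinct classes, whence $G\cong C_{4k}$. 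The two bridges are equivalent --- Observation~\ref{obs:reg} is precisely what turns an optimal incidence coloring of a regular $(\Delta+1)$-graph into a rainbow-closed-neighborhood vertex coloring --- but your route buys explicitness where the paper is terse: the deduction $P_{i-1}=P_{i+1}=\overline{P_i}$ and the alternation of the even-indexed subsequence supply the bookkeeping behind the paper's unproved ``four consecutive vertices are in distinct classes'' claim. You also make explicit the identification $K_4^-\cong C_4$, so that $2$-permutability of a connected $2$-regular graph is literally the condition $4\mid n$; the paper leaves this implicit in its closing sentence.
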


\begin{proof}
	We only need to prove the left implication.
	Let $G$ be a cycle such that 
	$\chi_i(G \cartp K_2) = \Delta(G \cartp K_2) + 1$.
	Let $G_1$ and $G_2$ be the two $G$-f\mbox{}ibers of $G \cartp K_2$, and let $c$ be its $(\Delta(G)+1)$-incidence coloring.
	For every $v \in G$ let $v^1$ and $v^2$ be the vertex associated with $v$ in $G_1$ and $G_2$, respectively.
	A $4$-partition of $G$ is then obtained by adding a vertex $v$ in $P_i$, if $c((v^2,v^1v^2)) = i$, where $1 \le i \le 4$. 
	As every incoming incidence of a vertex $v$ has the same color, we immediately infer that coloring of the incidences of $v$ 
	uniquely determines the colors of all the other incidences. 
	Consequently, every four consecutive vertices of $G$ belong to distinct partition 
	sets, and hence $G$ is isomorphic to the cycle $C_{4k}$, for some positive integer $k$.
\end{proof}

A (non-regular) graph $G$ is \textit{sub-$2$-permutable} if it admits a locally injective homomorphism to $K_{\Delta(G)+2}^-$. The following
is an immediate consequence of Theorem~\ref{thm:prism}.
\begin{corollary}
	Let $G$ be a sub-$2$-permutable graph. Then
	$$
		\chi_i(G \cartp K_2) = \Delta(G \cartp K_2) + 1\,.
	$$
\end{corollary}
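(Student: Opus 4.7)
The plan is to observe that the proof of Theorem~\ref{thm:prism} never actually uses the $2d$-regularity of $G$ beyond the consequences of admitting a locally injective homomorphism $h\colon G\to K_{\Delta(G)+2}^-$. Since sub-$2$-permutability gives exactly such a homomorphism, the same construction transfers verbatim to the non-regular setting.

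First I would recall (or re-derive) the auxiliary incidence coloring $c$ of $G$ that $h$ yields: for each $u\in V(G)$, write $\iota(u)$ for the index with $h(u)=v_{\iota(u)}$, and set $c((u,uw))=\iota(w)$ for every edge $uw$. Two incidences at the same vertex $u$ correspond to distinct neighbors $w,w'$, whose $\iota$-values differ by local injectivity; two incidences sharing an edge $uw$ receive colors $\iota(w)$ and $\iota(u)$, which differ because $v_{\iota(u)}v_{\iota(w)}\in E(K_{\Delta(G)+2}^-)$. Thus $c$ is a $(\Delta(G)+2,1)$-incidence coloring. The key spectral property needed later is that for every $u\in V(G)$, the color $\overline{\iota(u)}$ is missing at $u$: it does not appear as $\iota(w)$ for any neighbor $w$, since $v_{\overline{\iota(u)}}$ is the unique non-neighbor of $v_{\iota(u)}$ in $K_{\Delta(G)+2}^-$. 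This is the only place where regularity was implicitly used in Theorem~\ref{thm:prism}, and the homomorphism argument delivers it just as well when $d(u)<\Delta(G)$ (the spectrum is merely smaller, hence has even more missing colors, one of which is still $\overline{\iota(u)}$).

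Next I would replay the construction from Theorem~\ref{thm:prism} on $G\cartp K_2$: let $G_1,G_2$ be the two $G$-fibers, color the incidences of $G_1$ by $c$ and those of $G_2$ by $\overline{c}$ (the coloring obtained by swapping each $j$ with $\overline{j}$), and for every $v\in V(G)$ color $(v^1,v^1v^2)$ by $\overline{\iota(v)}$ and $(v^2,v^1v^2)$ by $\iota(v)$. The verification breaks into the same cases: incidences inside a single fiber are fine because $c$ and $\overline{c}$ are valid colorings of $G$; the two $K_2$-incidences on a shared edge use the complementary pair $\{\iota(v),\overline{\iota(v)}\}$; a $K_2$-incidence at $v^1$ and a $G$-incidence at $v^1$ differ because $\overline{\iota(v)}\notin S_c(v^1)$; and a $K_2$-incidence at $v^1$ and a $G$-incidence $(u^1,u^1v^1)$ at the neighboring vertex carry colors $\overline{\iota(v)}$ and $\iota(v)$, which differ trivially.

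Finally, I would observe that the resulting coloring uses at most $\Delta(G)+2=\Delta(G\cartp K_2)+1$ colors, matching the trivial lower bound $\chi_i(G\cartp K_2)\ge\Delta(G\cartp K_2)+1$, so equality holds. There is no real obstacle here, precisely because the corollary is pitched as immediate; the only point that deserves explicit mention is the observation above that the missing-color argument at each vertex depends solely on $h$ being a locally injective homomorphism into $K_{\Delta(G)+2}^-$, not on $G$ being regular.
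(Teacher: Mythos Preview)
Your proposal is correct and matches the paper's intent: the corollary is stated there as an immediate consequence of Theorem~\ref{thm:prism}, and your write-up simply makes explicit that the construction in that proof uses only the locally injective homomorphism $h\colon G\to K_{\Delta(G)+2}^-$ (to guarantee the missing color $\overline{\iota(u)}$ at each vertex), never the regularity of $G$. The only case you leave implicit is the adjacency between the $K_2$-incidence $(v^1,v^1v^2)$ and a $G$-incidence $(v^2,v^2u^2)$ in the opposite fiber, but this also follows since $\overline{\iota(v)}\neq\overline{\iota(u)}$ whenever $uv\in E(G)$; the paper's own proof of Theorem~\ref{thm:prism} is equally terse on this point.
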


\medskip
An incidence coloring of a graph $G$ is \textit{adjustable} if there exists a pair of colors $x$ and $y$ such that there is no 
vertex $v \in V(G)$ with $x,y \in S_0(v)$. We denote the colors $x$ and $y$ as \textit{free colors}. 
A graph $G$ is \textit{$2$-adjustable} if it admits an adjustable $(\Delta(G)+2)$-incidence coloring. 

Notice that all $(\Delta+1)$-graphs and $(\Delta+1)$-graphs together with a matching are $2$-adjustable; in the former case we have an extra color 
which is not used at all, and in the latter, the same two colors are used on the incidences corresponding to the matching edges.
For example, all cycles, complete bipartite graphs, and prisms $C_{6n} \cartp K_2$ are $2$-adjustable. 
In fact, in Proposition~\ref{prop:adj} we show that a big subclass of $2$-adjustable graphs can be classified in terms of homomorphisms.
By $\mathring{K}_n$ we denote the complete graph of order $n$ with a loop (see Fig.~\ref{fig:adjust} for an example).
\begin{figure}[ht]
	$$
		\includegraphics[scale=0.8]{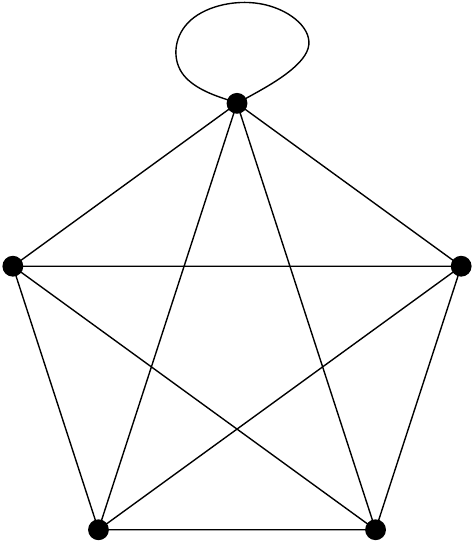}
	$$
	\caption{The graph $\mathring{K}_5$.}
	\label{fig:adjust}
\end{figure}
\begin{proposition}
	\label{prop:adj}
	If a graph $G$ admits a locally injective homomorphism to $\mathring{K}_{\Delta(G)+1}$, then $G$ is $2$-adjustable.
\end{proposition}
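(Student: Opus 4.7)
The plan is to construct an explicit $(\Delta(G)+2)$-incidence coloring of $G$ from the given locally injective homomorphism $f\colon V(G)\to\{1,\dots,\Delta(G)+1\}$ and to exhibit a free pair of colors witnessing $2$-adjustability. The key structural observation is that the set $M=\{uv\in E(G) \mid f(u)=f(v)\}$ is a matching in $G$: local injectivity forbids any vertex from having two distinct neighbors with its own $f$-image. Restricted to $G-M$, the map $f$ is a locally injective homomorphism to the simple graph $K_{\Delta(G)+1}$, and by the preceding proposition the assignment $c(v,vw)=f(w)$ is a valid $(\Delta(G)+1,1)$-incidence coloring of $G-M$ on the palette $\{1,\dots,\Delta(G)+1\}$; in particular $S_c^1(v)=\{f(v)\}$ and $S_c^0(v)\subseteq\{1,\dots,\Delta(G)+1\}\setminus\{f(v)\}$.

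The second step introduces one more color $0$ and extends $c$ to the matching edges. For each $uv\in M$ with $f(u)=f(v)=i$, I orient $u\to v$ and set $c(u,uv)=0$ at the tail; the color $c(v,vu)$ at the head must then be distinct from every $f(w)$ with $w\in (N(u)\cup N(v))\setminus\{u,v\}$ (cases (a) and (c)), from the label $i$ itself (which already colors $(w,wv)$ and $(w,wu)$ for non-matching neighbors $w$ via case (c)), and from $0$ (case (b)). The orientation of $M$ is chosen, together with a small local swap of colors on non-matching incidences near each matching edge if needed, so that a legal color for $c(v,vu)$ always exists.

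For adjustability I take the free pair to be $\{0,x\}$ for a color $x$ determined by the construction, observing that $0\in S_c^0(v)$ only for the tail-vertices of matching edges, while at each such tail the color $x$ can be excluded from $S_c^0(v)$. Consequently no vertex has both $0$ and $x$ in its outgoing spectrum, which witnesses that $G$ is $2$-adjustable.

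The main obstacle is precisely the extension of $c$ to matching edges: in extremal configurations (for instance odd cycles such as $C_5$), the $f$-values on the combined neighborhoods of the endpoints of a matching edge may exhaust all of $\{1,\dots,\Delta(G)+1\}$, so the naive extension has no legal color. The technical heart of the proof is therefore to find the right orientation of $M$---for example, one making the tails of matching edges independent in the auxiliary graph whose edges join vertices sharing a common neighbor in $G$---together with local recolorings on $G-M$ that free up a color for the head of each matching edge, and to verify that these choices remain compatible with the free-pair requirement.
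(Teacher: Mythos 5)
There is a genuine gap, and you have located it yourself: the extension of the coloring to the monochromatic (matching) edges is exactly the step you do not carry out. Your plan reserves only one new color $0$ for the tail incidence of each matching edge and tries to place the head incidence back into the old palette $\{1,\dots,\Delta(G)+1\}$; as you observe, the $f$-values on the combined neighborhoods of a matching edge can exhaust that palette, and the promised remedy (``the right orientation of $M$'' together with unspecified ``local recolorings'') is never constructed or shown to exist. The free-pair claim inherits the same problem: you would need a single color $x$ that is simultaneously excludable from $S_c^0$ at \emph{every} tail vertex, which is asserted but not established. As written, the argument is a sketch whose central difficulty is deferred.

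The paper's proof avoids the difficulty entirely by spending \emph{both} extra colors on the matching rather than one. Since only the loop vertex $v_0$ of $\mathring{K}_{\Delta(G)+1}$ can absorb a monochromatic edge, $f^{-1}(v_0)$ induces a subgraph $H$ of maximum degree $1$; one properly $2$-colors $H$ with the two new colors $\Delta(G)+1$ and $\Delta(G)+2$ via a map $d$, and sets $c((v,uv))=g(u)$ when $f(u)\neq v_0$ and $c((v,uv))=d(u)$ when $f(u)=v_0$, where $g(u)$ is the index of $f(u)$. Every incidence is colored by the image of its far endpoint, so matching edges cause no conflict (their two incidences receive the distinct colors $d(u)$ and $d(v)$), no reuse of old colors or recoloring is needed, and adjustability is immediate: a vertex has $\Delta(G)+1$ or $\Delta(G)+2$ in $S_c^0$ only through a neighbor in $f^{-1}(v_0)$, and local injectivity gives at most one such neighbor, so no vertex sees both free colors. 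Replacing your single color $0$ by this two-color treatment of the matching is what closes your argument.
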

\begin{proof}
	Let $G$ admit a locally injective homomorphism $f$ to $\mathring{K}_{\Delta(G)+1}$ and let $v_0, v_1,\dots,v_{\Delta}$ be the vertices of $\mathring{K}_{\Delta(G)+1}$,
	where $v_0$ is the vertex incident with a loop. Let $g$ be a function such that $g(v) = i$ if $f(v) = v_i$, for every $v \in V(G)$.
	Notice that the vertices $f^{-1}(v_0)$ of $G$ mapped to $v_0$ induce a subgraph $H$ of maximum degree $1$ in $G$,
	i.e. every vertex of $G$ has at most one neighbor $u$ such that $f(u)=v_0$. Let $d$ be a proper vertex coloring of $H$ with $2$ colors, say $\Delta(G)+1$ and $\Delta(G) + 2$. 
	An adjustable coloring $c$ of $G$ with free colors $\Delta(G)+1$ and $\Delta(G)+2$ can then be constructed as follows. For each $v \in V(G)$ let
    $$
		c((v, uv)) = 
		\begin{cases}
			\ g(u) & \text{if }g(u) > 0,\\
			\ d(u) & \text{otherwise.} 
		\end{cases}
    $$	
    It is straightforward to verify that $c$ is an incidence coloring, in fact, $c$ is even a $(\Delta(G)+2,1)$-incidence coloring.
\end{proof}

Again, the inverse is not true in general. A cycle on $5$ vertices is $2$-adjustable (see Fig.~\ref{fig:fivecycle}), but does not admit a locally injective 
homomorphism to $\mathring{K}_3$.
\begin{figure}[ht]
	$$
		\includegraphics[scale=0.8]{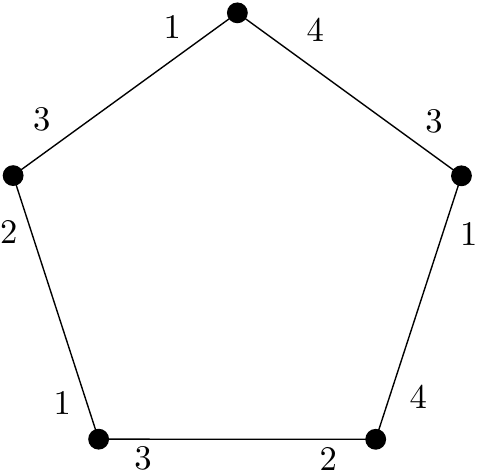}
	$$
	\caption{An adjustable incidence coloring of $C_5$ with four colors, with the free colors $1$ and $2$.}
	\label{fig:fivecycle}
\end{figure}

Now, we are ready to state the main theorem of this section.
\begin{theorem}
	\label{thm:peradj}
	Let $G$ be a sub-$2$-permutable graph and let $H$ be a $2$-adjustable graph. Then
	$$
		\chi_i(G \cartp H) \le \Delta(G \cartp H) + 2\,.
	$$
\end{theorem}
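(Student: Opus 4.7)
The plan is to follow the template of Theorem~\ref{thm:product}: combine an optimal $(\Delta(G)+2,1)$-incidence coloring $c$ of $G$ (from sub-$2$-permutability, realised by a locally injective homomorphism $\phi\colon V(G)\to V(K^{-}_{\Delta(G)+2})$) with an adjustable $(\Delta(H)+2)$-incidence coloring $d$ of $H$ with free colors $x,y$ (from $2$-adjustability). I fix color sets $A$ and $B$ with $A\cap B=\{x,y\}$, so that $|A\cup B|=\Delta(G\cartp H)+2$, and furthermore arrange $\{x,y\}$ to be one of the matched pairs of the underlying $K^{-}_{\Delta(G)+2}$ (so $y=\overline{x}$), aligning the pairing structures on the two sides.

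For each $v\in V(H)$, $2$-adjustability yields a distinguished free color $f(v)\in\{x,y\}$ with $f(v)\notin S_d^0(v)$. On the $H$-fibers I keep $d$ unchanged: set $F((u,v),(u,v)(u,v'))=d(v,vv')$. On the $G$-fibers, use $c$ up to a replacement at each incidence of original color $\overline{f(v)}$: the natural first candidate is to send $\phi(u')=\overline{f(v)}$ to $\mu_c(u)=\overline{\phi(u)}$, which by the matched-pair placement coincides with $f(v)$ when $\phi(u)\in\{x,y\}$ and lies in $A\setminus\{x,y\}$ otherwise. The intention is that this replacement is safe against $H$-incidences at $(u,v)$ because $f(v)\notin S_d^0(v)$ (in the first case) or because $\mu_c(u)\notin B$ (in the second), and safe against other $G$-incidences in the fiber because local injectivity of $\phi$ and the matching structure imply that no neighbour of $u$ has $\phi$-image equal to $\mu_c(u)$.

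The verification has three layers. Each $H$-fiber trivially inherits the validity of $d$. Each $G$-fiber must remain a valid incidence coloring of $G$: the swap is well-defined per vertex, and adjacent incidences at $(u,v)$ do not collide because the replacement is tailored to $u$ via $\mu_c(u)$. The delicate layer is the cross-conflict verification: at each $(u,v)$ the outgoing and incoming $G$- and $H$-incidence colors must form pairwise disjoint sets. Adjustability handles the outgoing $H$-colors via $f(v)\notin S_d^0(v)$, and the concentration property $|S_c^1(u)|=1$ restricts incoming $G$-colors at $(u,v)$ to the single value $\phi(u)$, so only a few cases remain to be checked against the outgoing colors of $(u',v)$ and against $S_d^1(v)$.

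The main obstacle I anticipate lies in two places. First, when $\phi(u)\notin\{x,y\}$, the replacement $\mu_c(u)$ can coincide with $\phi(w)$ for some common neighbour structure around the edge $uu'$, producing an in-fiber clash at $(u',v)$; resolving this requires either a more refined, edge-dependent choice of replacement or a fallback to colors from $B\setminus A$ that avoid $S_d(v)$. Second, against incoming $H$-incidences (colors in $S_d^1(v)$), $2$-adjustability says nothing directly, so I would invoke the matched-pair placement of $\{x,y\}$ together with a possible symmetric re-colouring of the $H$-incidences on certain $G$-fibers (in the spirit of the $c\leftrightarrow\overline{c}$ swap from Theorem~\ref{thm:prism}) to exclude the offending conflicts. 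Turning this informal scheme into a clean case analysis—ideally by formulating a single set of combinatorial conditions that the choices $\{x,y\}$, $f(v)$, and the replacements jointly satisfy—is the most technical part of the plan.
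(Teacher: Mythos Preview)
Your proposal assembles the right ingredients (the pair $c,\overline{c}$ of $(\Delta(G)+2,1)$-colorings of $G$, the adjustable coloring $d$ of $H$, a two-color overlap $\{x,y\}$) but assigns them the wrong roles, and the two obstacles you anticipate are genuine and do not dissolve. Recoloring \emph{$G$-incidences} by $\mu_c(u)$ is exactly where the first obstacle bites: the incidence $\big((u,v),(u,v)(u',v)\big)$ is adjacent to every outgoing $G$-incidence at $(u',v)$, whose colors range over $\{\phi(w):w\in N_G(u')\}$, and nothing prevents $\mu_c(u)=\overline{\phi(u)}$ from appearing there. The second obstacle is also real: keeping $d$ untouched forces you to dodge all of $S_d(v)$, not just $S_d^0(v)$, and adjustability gives you no handle on $S_d^1(v)$. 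Your proposed fallbacks (edge-dependent replacements, re-coloring $H$-incidences on selected fibers) would have to do essentially all the work, and you have not shown how.

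The paper's proof reverses the roles, and this is the missing idea. Leave each $G$-fiber intact, colored either by $c$ or by $\overline{c}$ according to a rule depending only on $v$: use $c$ on $G_v$ if $y\notin S_d^0(v)$ and $\overline{c}$ if $y\in S_d^0(v)$. Then recolor \emph{$H$-incidences}: whenever $d(v,vv')=x$ replace it by $\mu_c(u)$, and whenever $d(v,vv')=y$ replace it by $\mu_{\overline{c}}(u)$. Both of your obstacles vanish. There is no in-fiber $G$-clash because each $G_v$ is an unmodified copy of $c$ or $\overline{c}$. There is no $S_d^1$ issue because the only $H$-colors that can collide with $A$ are $x$ and $y$, and these are the ones being replaced; moreover, an $H$-incidence recolored from $x$ at $(u,v)$ has $x\in S_d^0(v)$, hence $y\notin S_d^0(v)$, hence $G_v$ uses $c$ and $\mu_c(u)\notin S_c(u)$ settles the cross-check (and symmetrically for $y$). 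Within $H_u$ the replacement is safe because it is constant in $u$ (all $x$'s go to $\mu_c(u)$, all $y$'s to $\mu_{\overline{c}}(u)=\phi(u)\neq\overline{\phi(u)}=\mu_c(u)$). Finally, note that the constraint $\overline{x}=y$ you impose is unnecessary: $x$ and $y$ are just the two overlap labels, and the pairing on $A$ enters only through $\mu_c(u)=\overline{\phi(u)}$ and $\mu_{\overline{c}}(u)=\phi(u)$. The $c\leftrightarrow\overline{c}$ swap you mention as a fallback is in fact the primary mechanism.
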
	

\begin{proof}
	Let $c$ and $\overline{c}$ be two $(\Delta(G)+2)$-incidence colorings of $G$ with the colors $1,\dots,\Delta(G)+2$ as described in the proof of
	Theorem~\ref{thm:prism}. Let $d$ be an adjustable incidence coloring of $H$ with the colors $\Delta(G)+1,\Delta(G)+2,\dots,\Delta(G)+\Delta(H)+2$, 
	where $x=\Delta(G)+1$ and $y=\Delta(G)+2$ are free colors in $d$.	
	
	Now, we construct an incidence coloring $f$ of $G \cartp H$ with $\Delta(G)+\Delta(H)+2$ colors. 
	For every pair of vertices $u\in V(G)$, $v\in V(H)$, and edges $uu'\in E(G)$, $vv'\in E(H)$ we define
	\begin{align*}
		f\big((u,v), (u,v)(u',v)\big)&=
		\begin{cases}
			\ c(u, uu') & \text{if }y \notin S_d^0(v),\\
			\ \overline{c}(u, uu') & \text{if }y \in S_d^0(v)
		\end{cases} \\
	\end{align*}		
	and
	\begin{align*}
		f\big((u,v), (u,v)(u,v')\big)&=
		\begin{cases}
			\ d(v,vv') & \text{if } d(v,vv') \notin \set{x,y},\\
			\ \mu_c(u) & \text{if } d(v,vv') = x, \\
			\ \mu_{\overline{c}}(u) & \text{if } d(v,vv') = y\,.
		\end{cases}				
	\end{align*}
	Recall, $\mu_c(v)$ is the color missing in the spectrum $S_c(v)$.
	Informally, we color the incidences in $G$-f\mbox{}ibers by the colorings $c$ and $\overline{c}$, and the incidences in $H$-f\mbox{}ibers by the coloring $d$,
	where the free colors $x$ and $y$ are being replaced by the colors not used by $c$ and $\overline{c}$. See Fig.~\ref{fig:c4c5} for an example of such 
	a coloring of $C_4 \cartp C_5$.
	
	It remains to prove that $f$ is indeed an incidence coloring of $G \cartp H$. Clearly, with an exception of $H$-incidences receiving the colors $x$ and $y$
	in $d$, there are no conflicts in $f$. So, let $((u,v),(u,v)(u,v'))$ be an $H$-incidence colored by $\mu_c(u)$	(resp. $\mu_{\overline{c}}(u)$). There
	is no conflict in $H_u$ (since there would be two adjacent incidences colored by $x$ in $H$), neither in $G_v$ (by the definition of $\mu_c(u)$, resp. $\mu_{\overline{c}}(u)$). 
	This completes the proof.
\end{proof}

\begin{figure}[ht]
	$$
		\includegraphics{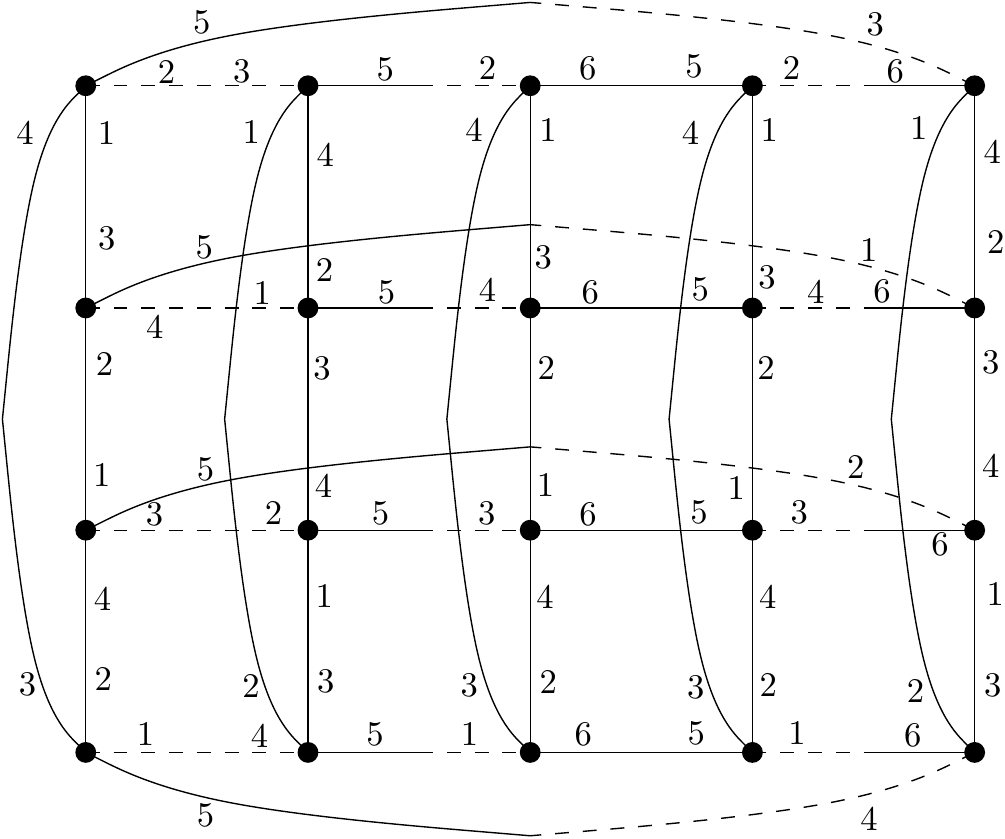}
	$$
	\caption{An example of an incidence coloring of $C_4 \cartp C_5$ obtained from incidence colorings of $2$-permutable graph $C_4$ 
	and $2$-adjustable graph $C_5$ as described in the proof of Theorem~\ref{thm:peradj}. The dashed lines denote the $H$-incidences being recolored.}
	\label{fig:c4c5}
\end{figure}

For example, as a corollary of Theorem~\ref{thm:peradj} we immediately infer that the incidence chromatic number of the Cartesian product 
of cycles $C_{4n}$ and $C_m$ is a $(\Delta+2)$-graph (in fact the statement holds for the Cartesian products of cycles of arbitrary lengths as shown 
by Sopena and Wu~\cite{SopWu13}).

\section{Conclusion}
\label{sec:con}

Clearly, Conjecture~\ref{conj:main} is not valid for all Cartesian products of graphs;
consider simply a Cartesian product of a Paley graph and $K_2$. 
We have shown that it holds if one of the factors is a $(\Delta+1)$-graph and the maximum degree of the second is big enough (see Corollary~\ref{cor:deg}). 
We believe the following also holds.
\begin{conjecture}
	Let $G$ be a $(\Delta+1)$-graph and $H$ be a graph with $\chi_i(H) \le \Delta(H) + 2$.
    Then,
    $$
    	\chi_i(G \cartp H) \le \Delta(G \cartp H) + 2\,.
    $$
\end{conjecture}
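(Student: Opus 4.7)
The plan is to adapt the argument of Theorem~\ref{thm:product} to the weaker hypothesis $\chi_i(H) \le \Delta(H)+2$. Let $c$ be an optimal incidence coloring of $G$ using the palette $A = \{0, \ldots, \Delta(G)\}$, and let $d$ be a $(\Delta(H)+2)$-incidence coloring of $H$ using the palette $B = \{\Delta(G), \ldots, \Delta(G)+\Delta(H)+1\}$. The union $A \cup B$ has exactly $\Delta(G)+\Delta(H)+2 = \Delta(G \cartp H)+2$ colors, with a single overlapping color, $A \cap B = \{\Delta(G)\}$.

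Following the proof of Theorem~\ref{thm:product}, for each vertex $v \in V(H)$ I would choose (whenever possible) a missing color $g_v \in B \setminus S_d(v)$ to serve as a replacement for the overlapping color. Define the coloring $f$ of $I(G \cartp H)$ by assigning to every $H$-incidence the color given by $d$, and to every $G$-incidence the color given by $c$, with every occurrence of the color $\Delta(G)$ inside the $G$-fiber $G_v$ replaced by $g_v$. The verification that $f$ is a proper incidence coloring then mirrors the case analysis of Theorem~\ref{thm:product}: incidences within a single $G$-fiber or $H$-fiber inherit the validity of $c$ and $d$ (with $g_v$ acting as a faithful relabeling), and cross-interference between a $G$-incidence and an $H$-incidence at a common vertex is avoided because $g_v \notin S_d(v)$ and every non-overlapping color of $A$ lies outside $B \supseteq S_d(v)$.

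The main obstacle is guaranteeing the existence of the replacement $g_v$ at every vertex $v$: at a vertex with $|S_d(v)| = \Delta(H)+2$, the $d$-spectrum exhausts the palette $B$ and no such $g_v$ exists. The minimality of $d$-spectra, used in Theorem~\ref{thm:product} via Observation~\ref{obs:reg}, is no longer available under the weaker hypothesis. Two natural refinements suggest themselves: (i) show that $H$ always admits a $(\Delta(H)+2)$-incidence coloring $d$ with $|S_d(v)| \le \Delta(H)+1$ at every vertex; or (ii) at each problematic vertex $v$, locally recolor $G_v$ using a different optimal coloring of $G$ that avoids the overlapping color.

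Unfortunately, option (i) cannot hold in full generality, since the gap $\chi_{i,1}(H) - \chi_i(H)$ can be arbitrarily large as noted in Section~\ref{sec:prel}; and option (ii) is blocked head-on by $\chi_i(G) = \Delta(G)+1$, because discarding the overlapping color leaves only the $\Delta(G)$ colors of $A \setminus B$ to color $G_v$. Circumventing these two obstructions is the heart of the difficulty. A promising direction is to hybridize the techniques of Theorems~\ref{thm:product} and~\ref{thm:peradj}: use two complementary incidence colorings of $G$ (for instance, $c$ together with a permuted copy $\overline{c}$ as in the proof of Theorem~\ref{thm:peradj}), assign each $G$-fiber one of the two colorings according to a bipartition of $V(H)$ distinguishing the problematic vertices, and absorb the overlapping color into the free slots created by this alternation. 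Making such a bipartition exist and be compatible with a given $d$ is where the main work will lie, and this appears to be the principal obstacle to resolving the conjecture in full generality.
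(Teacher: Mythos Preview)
The statement you are attempting to prove is a \emph{conjecture} in the paper (stated in Section~\ref{sec:con}), not a theorem: the paper does not supply a proof, so there is nothing to compare your argument against. Your proposal is, accordingly, not a proof either---and you say as much. What you have written is a clear and accurate diagnosis of why the method of Theorem~\ref{thm:product} does not extend: with only the hypothesis $\chi_i(H)\le\Delta(H)+2$, one can no longer guarantee a missing color $g_v\in B\setminus S_d(v)$ at every vertex of $H$, because a vertex of degree $\Delta(H)$ may have full $d$-spectrum $|S_d(v)|=\Delta(H)+2$. Your observation that forcing $|S_d(v)|\le\Delta(H)+1$ everywhere amounts, at maximum-degree vertices, to asking for $|S_d^1(v)|=1$, and that this is genuinely obstructed (e.g.\ by $K_{n,n}$, as in Section~\ref{sec:prel}), is correct. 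Likewise, your remark that dropping the overlapping color from the $G$-fiber is impossible because $\chi_i(G)=\Delta(G)+1$ is the right reason option~(ii) fails.

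The hybrid idea you sketch---alternating between two colorings $c$ and $\overline{c}$ of $G$ across $G$-fibers, in the spirit of Theorem~\ref{thm:peradj}---is a reasonable line of attack, but note that in Theorem~\ref{thm:peradj} the existence of the complementary pair $(c,\overline{c})$ relies on $G$ being sub-$2$-permutable, which is a stronger structural assumption than merely $\chi_i(G)=\Delta(G)+1$. For a general $(\Delta+1)$-graph $G$ there is no obvious analogue of the involution $i\mapsto\overline{i}$ producing a second coloring whose ``missing'' color at each vertex complements that of the first. So the gap is not just in choosing a bipartition of $V(H)$; it already appears on the $G$ side. This is consistent with the paper leaving the statement open.
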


We also ask if the condition on a graph $G$ (the first factor) can be somehow relaxed. As Sopena and Wu~\cite{SopWu13} showed, 
every toroidal grid $T_{m,n}$, i.e. a Cartesian product of cycles $C_m$ and $C_n$, admits an incidence coloring with at most $\Delta(T_{m,n}) + 2$ colors. 
\begin{question}
	Do there exist graphs $G$ and $H$ with $\chi_i(G) = \Delta(G) + 2$ and $\chi_i(H) = \Delta(H) + 2$ such that
	$\chi_i(G \cartp H) > \Delta(G\cartp H) + 2$? 
\end{question}
On the other hand, when both factors are $(\Delta+1)$-graphs, we may also ask:
\begin{question}
	When is the Cartesian product of two $(\Delta+1)$-graphs also a $(\Delta+1)$-graph? 
\end{question}

In this paper, we have introduced two recoloring techniques. We always color the f\mbox{}ibers of the product in the same way (up to a permutation of colors in the f\mbox{}ibers of the same factor) 
and then try to recolor some colors in the f\mbox{}ibers of one factor using some available colors of the second factor. 
These techniques are somehow limited as we preserve the structure of both initial colorings,
whereas it may be more efficient to use all the available colors everywhere. In~\cite{SopWu13}, the authors use pattern tiling, but their graphs are well structured in 
that case. Additional techniques of recoloring, but still not relying to the structure of the factors too much, would surely contribute to the field.

\paragraph{Acknowledgment.} 
The authors thank two anonymous referees for their remarks and careful reading of the manuscript.
This research was supported by the Czech Science Foundation grant GA14--10799S and Slovenian Research Agency Program P1--0383.
The second author also acknowledges partial support by the National Scholarship Programme of the Slovak Republic.

\bibliographystyle{plain}

\bibliography{mainBib}

\end{document}